\def\frak{\mathfrak}
\newtheorem{prop}[equation]{Proposition}
\newtheorem*{prop*}{Proposition}
\newtheorem{thm}[equation]{Theorem}
\newtheorem*{thm*}{Theorem}
\newtheorem{lem}[equation]{Lemma}
\newtheorem*{lem*}{Lemma}
\newtheorem*{kor*}{Corollary}
\numberwithin{equation}{section}
\newcommand{\frg}{\mathfrak{g}}
\newcommand{\frk}{\mathfrak{k}}
\newcommand{\frp}{\mathfrak{p}}
\newcommand{\frt}{\mathfrak{t}}
\newcommand{\frsu}{\mathfrak{su}}
\newcommand{\frso}{\mathfrak{so}}
\newcommand{\frsp}{\mathfrak{sp}}
\newcommand{\bbar}{\,|\,}
\def\bbC{\mathbb{C}}
\def\bbR{\mathbb{R}}
\def\bbZ{\mathbb{Z}}
\let\ccdot\cdot
\def\cdot{\hbox to 2.5pt{\hss$\ccdot$\hss}}
\newcommand{\p}{{\frak p}}
\newcommand{\eq}{\begin{equation}}
	\newcommand{\eeq}{\end{equation}}
\newcommand{\eqn}{\begin{equation*}}
	\newcommand{\bmul}{\begin{multline*}}
		\newcommand{\eemul}{\end{multline*}}
	\newcommand{\eeqn}{\end{equation*}}
\newcommand{\pf}{\begin{proof}}
	\newcommand{\epf}{\end{proof}}
\newcommand{\la}{\lambda}
\renewcommand{\phi}{\varphi}
\newcommand{\eps}{\varepsilon}
\let\ssize\scriptstyle
\newif\ifFIRST\newdimen\MAXright\MAXright0pt
\def\sdynkin{\bgroup\eightpoint\dynkin}
\def\endsdynkin{\enddynkin\egroup}
\def\dynkin{\bgroup\FIRSTtrue\hskip.5em\setbox1\hbox{$\diagup$}%
	\setbox2\hbox{$\diagdown$}%
	\setbox0\hbox to2\wd1{\hrulefill}%
	\setbox3\hbox{$\bullet$}%
	\setbox4\hbox{$\times$}%
	\setbox7\hbox{$\circ$}
	\def\whiteroot##1{\ifFIRST\setbox5\hbox{$##1$}\ifdim\wd5>1.3em
		\hskip-.5em\hskip.5\wd5\fi\fi\FIRSTfalse
		\hskip-.25em\raise1.5\wd3\hbox to0pt{\hss\hskip.45em$
			\ssize##1$\hss}\copy7\hskip-.25em\setbox6\hbox{$##1$}
		\MAXright\wd6}
	\def\root##1{\ifFIRST\setbox5\hbox{$##1$}\ifdim\wd5>1.3em%
		\hskip-.5em\hskip.5\wd5\fi\fi\FIRSTfalse%
		\hskip-.25em\raise1.5\wd3\hbox to0pt{\hss\hskip.45em$%
			\ssize##1$\hss}\copy3\hskip-.25em\setbox6\hbox{$##1$}%
		\MAXright\wd6}%
	\def\whitedroot##1{\ifFIRST\setbox5\hbox{$##1$}\ifdim\wd5>1.3em
		\hskip-.5em\hskip.5\wd5\fi\fi\FIRSTfalse
		\hskip-.25em\lower1.8\wd3\hbox to0pt{\hss\hskip.45em$
			\ssize##1$\hss}\copy7\hskip-.25em\setbox6\hbox{$##1$}
		\MAXright\wd6}%
	\def\whiterroot##1{\hskip-.25em\copy7\hbox to0pt{\hskip.3em$\ssize##1$\hss}%
		\hskip-.25em\setbox6\hbox{\hskip.6em$##1##1$}%
		\MAXright\wd6}%
	\def\droot##1{\ifFIRST\setbox5\hbox{$##1$}\ifdim\wd5>1.3em%
		\hskip-.5em\hskip.5\wd5\fi\fi\FIRSTfalse%
		\hskip-.25em\lower1.8\wd3\hbox to0pt{\hss\hskip.45em$%
			\ssize##1$\hss}\copy3\hskip-.25em\setbox6\hbox{$##1$}%
		\MAXright\wd6}%
	\def\rroot##1{\hskip-.25em\copy3\hbox to0pt{\hskip.3em$\ssize##1$\hss}%
		\hskip-.25em\setbox6\hbox{\hskip.6em$##1##1$}%
		\MAXright\wd6}%
	\def\norroot##1{\hskip-.36em\copy4\hbox to0pt{\hskip.3em$\ssize##1$\hss}%
		\hskip-.48em\setbox6\hbox{\hskip.6em$##1##1$}%
		\MAXright\wd6}%
	\def\noroot##1{\ifFIRST\setbox5\hbox{$##1$}\ifdim\wd5>1.3em%
		\hskip-.5em\hskip.5\wd5\fi\fi\FIRSTfalse%
		\hskip-.36em\raise1.5\wd3\hbox to0pt{\hss\hskip.6em$%
			\ssize##1$\hss}\copy4\hskip-.38em\setbox6\hbox{$##1$}%
		\MAXright\wd6}%
	\def\nodroot##1{\ifFIRST\setbox5\hbox{$##1$}\ifdim\wd5>1.3em%
		\hskip-.5em\hskip.5\wd5\fi\fi\FIRSTfalse%
		\hskip-.36em\lower1.8\wd3\hbox to0pt{\hss\hskip.6em$%
			\ssize##1$\hss}\copy4\hskip-.38em\setbox6\hbox{$##1$}%
		\MAXright\wd6}%
	\def\nolink{\hskip\wd0}
	\def\link{\raise.22em\copy0}%
	\def\llink##1{\raise.32em\copy0\hskip-\wd0%
		\raise.12em\copy0\hskip-.5\wd0\hbox to0pt{\hss$##1$\hss}\hskip.5\wd0}%
	\def\lllink##1{\raise.22em\copy0\hskip-\wd0\raise.32em\copy0\hskip-\wd0%
		\raise.12em\copy0\hskip-.5\wd0\hbox to0pt{\hss$##1$\hss}\hskip.5\wd0}%
	\def\rootupright##1{\hbox to0pt{\raise.45em\copy1\hskip-.25em\raise1.3\ht1%
			\hbox{\copy3\hskip.3em$\ssize##1$}\hss}%
		\setbox6\hbox{\hskip.6em\copy1\copy1$##1##1$}%
		\ifdim\MAXright<\wd6\MAXright\wd6\fi}%
	\def\whiterootupright##1{\hbox to0pt{\raise.45em\copy1\hskip-.25em\raise1.3\ht1
			\hbox{\copy7\hskip.3em$\ssize##1$}\hss}
		\setbox6\hbox{\hskip.6em\copy1\copy1$##1##1$}
		\ifdim\MAXright<\wd6\MAXright\wd6\fi}
	\def\norootupright##1{\hbox to0pt{\raise.45em\copy1\hskip-.36em\raise1.3\ht1%
			\hbox{\copy4\hskip.3em$\ssize##1$}\hss}%
		\setbox6\hbox{\hskip.6em\copy1\copy1$##1##1$}%
		\ifdim\MAXright<\wd6\MAXright\wd6\fi}%
	\def\rootdownright##1{\hbox to0pt{\raise-.5em\copy2\hskip-.25em\raise-1.35\ht1%
			\hbox{\copy3\hskip.3em$\ssize##1$}\hss}\setbox6%
		\hbox{\hskip.6em\copy2\copy2$##1##1$}%
		\ifdim\MAXright<\wd6\MAXright\wd6\fi}%
	\def\whiterootdownright##1{\hbox to0pt{\raise-.5em\copy2\hskip-.25em\raise-1.35\ht1
			\hbox{\copy7\hskip.3em$\ssize##1$}\hss}\setbox6
		\hbox{\hskip.6em\copy2\copy2$##1##1$}
		\ifdim\MAXright<\wd6\MAXright\wd6\fi}
	\def\rootdown##1{\hbox to0pt{\hskip-.05em\vrule height.25em depth.65em%
			\hskip-.25em\raise-.95em\hbox{\copy3\hskip.3em$\ssize##1$}\hss}%
		\setbox6\hbox{$##1$}%
		\ifdim\MAXright<\wd6\MAXright\wd6\fi}%
	\def\whiterootdown##1{\hbox to0pt{\hskip-.05em\vrule height.25em depth.65em
			\hskip-.25em\raise-.95em\hbox{\copy7\hskip.3em$\ssize##1$}\hss}
		\setbox6\hbox{$##1$}
		\ifdim\MAXright<\wd6\MAXright\wd6\fi}
	\def\dots{\hskip.5em\cdots\hskip.5em}}%
\def\enddynkin{\ifdim\MAXright>1em\hskip.5\MAXright\else\hskip.5em\fi\egroup}%
\begin{document} 
	
	\title[Dirac inequality]{Dirac inequality for highest weight Harish-Chandra modules I}
	\author{Pavle Pand\v zi\'c}
	\address[Pand\v zi\'c]{Department of Mathematics, Faculty of Science, University of Zagreb, Bijeni\v cka 30, 10000 Zagreb, Croatia}
	\email{pandzic@math.hr}
	\author{Ana Prli\'c}
	\address[Prli\'c]{Department of Mathematics, Faculty of Science, University of Zagreb, Bijeni\v cka 30, 10000 Zagreb, Croatia}
	\email{anaprlic@math.hr}
	\author{Vladim\'{\i}r Sou\v cek}
	\address[Sou\v cek]{Matematick\'y \'ustav UK, Sokolovsk\'a 83, 186 75 Praha 8, Czech Republic}
	\email{soucek@karlin.mff.cuni.cz}
	\author{V\'it Tu\v cek}
	\address[Tu\v cek]{Department of Mathematics, Faculty of Science, University of Zagreb, Bijeni\v cka 30, 10000 Zagreb, Croatia}
	\email{}
	\date{}
	\thanks{P.~Pand\v zi\'c, A.~Prli\'c are and V.~Tu\v cek were supported by the QuantiXLie  Center of Excellence, a project 
		cofinanced by the Croatian Government and European Union through the European Regional Development Fund - the Competitiveness and Cohesion Operational Programme 
		(KK.01.1.1.01.0004). V.~Sou\v cek is
		supported by the grant GACR GX19-28628.}
	\subjclass[2010]{primary: 22E47}
	\keywords{}
	\begin{abstract} 
		Let $G$ be a connected simply connected noncompact classical simple  Lie group of Hermitian type. Then $G$ 
		has unitary highest weight representations. The proof of the classification of unitary highest weight representations of $G$  given by Enright, Howe and Wallach is based on the Dirac inequality of Parthasarathy, Jantzen’s formula and Howe’s theory of dual pairs where one group in the pair is compact.
		In this paper we focus on the Dirac inequality which can be used to prove the classification in a more direct way.
	\end{abstract}

	\maketitle
	
	\section{Introduction}
	
	\bigskip
	
	In this introduction, we give an outline of the representation theory context for the results in this paper.
	However, in each of the cases described in Tables \ref{tab:table1} and \ref{tab: table2}, all the notions become completely explicit and elementary, and the representation theory context may be forgotten. Therefore, the reader who is not familiar with, or interested in, representation theory can mostly ignore the rest of the introduction and only check the concrete definitions given in Tables \ref{tab:table1} and \ref{tab: table2} before going to Sections 2 and 3 which contain our main results.

	Let $G$ be a connected simply connected noncompact classical simple  Lie group of Hermitian type.  (Exceptional Lie groups of Hermitian type are treated in \cite{PPST1}). Let $\Theta$ be a Cartan involution of $G$ and let $K$ be the group of fixed points of $\Theta$. If $Z$ denotes the center of $G$, then $K/Z$ is a maximal compact subgroup of $G/Z$.
	Let $\mathfrak{g}_0$ and $\mathfrak{k}_0$ be the Lie algebras of $G$ and $K$, respectively, and let $\mathfrak{g}_0 = \mathfrak{k}_0 \oplus \mathfrak{p}_0$ be the Cartan decomposition. Let $\mathfrak{t}_0$ be the common Cartan subalgebra of $\mathfrak{g}_0$ and $\mathfrak{k}_0$ and let $\mathfrak{g}$, $\mathfrak{k}$ and $\mathfrak{t}$ be the complexifications of the Lie algebras $\mathfrak{g}_0$, $\mathfrak{k}_0$ and $\mathfrak{t}_0$.  Let $\Delta^{+}_{\frg}\supset\Delta^{+}_{\frk}$ denote fixed sets of positive respectively positive compact roots. Since we assume that pair $(G, K)$ is Hermitian, we have a $K$--invariant decomposition $\frp = \frp^{+} \oplus \frp^{-}$ and $\frp^ {\pm}$ are abelian subalgebras of $\frp$. Let $\rho$ denote the half sum of positive roots  for $\frg$.
	
	A unitary representation of $G$ such that the underlying $(\frg, K)$--module is an irreducible quotient of a Verma module is called a unitary highest weight module. It is generated by a weight vector  that is annihilated by the action of all positive root spaces in $\frg$.
	
	For $\lambda \in \mathfrak{t}^{*}$ which are $\Delta^{+}_{\frk}$-- dominant integral (that means $\frac{2 \left < \lambda, \alpha \right >}{\left < \alpha, \alpha \right >}$ has to be an integer greater or equal to zero), let $N(\lambda)$ denote the generalized Verma module. By definition $N(\lambda)= S(\frp^{-}) \otimes F_{\lambda}$, where  $F_{\lambda}$ is the irreducible $\frk$--module with highest weight $\lambda$. The generalized Verma module  $N(\lambda)$ is a highest weight module ($\lambda$ is the highest weight of the $K$--type $F_{\lambda}$ but also a $\frg$--highest weight of $N(\lambda)$) which doesn't have to be irreducible or unitary. Our main goal in this paper is to determine those $N(\lambda)$ which correspond to unitary irreducible representation of $G$. We consider only real highest weights $\lambda$ since this is a necessary condition for unitarity. In case $N(\lambda)$ is not irreducible, we will consider the irreducible quotient $L(\lambda)$ of $N(\lambda)$ and we will determine those weights $\lambda$ which correspond to unitarizable $L(\lambda)$.
	
	Harish-Chandra has shown that $G$ admits non-trivial unitary highest weight modules precisely when $(G, K)$ is a Hermitian symmetric pair and that is precisely when the Lie algebra $\frg_0$ is one of the Lie algebras listed in tables \ref{tab:table1} and \ref{tab: table2}. To learn more about highest weight modules see \cite{A1}, \cite{DES}, \cite{EHW}, \cite{EJ}, \cite {ES}, \cite{J}. 
	\bigskip
	
	In \cite{EHW} (and independently in \cite{J}), a complete classification of the unitary highest weight modules was given using the Dirac inequality, Jantzen's fomula and Howe's theory of dual pairs.  They proved that  $L(\lambda)$ is unitarizable if and only if the strict Dirac inequality holds for all $K$--types occurring in $L(\lambda)$. This criterion is useful, but it is not easy to use because it is difficult to determine the $K$-types of $L(\lambda)$. The purpose of this and our future work is to show that the same result can be proved more directly using the Dirac inequality in a more substantial way. 
	
	\bigskip
	
	The structure of $S(\frp^{-})$ is very well known (see \cite{S}). The $K$--types of $S(\frp^ {-})$ are called the Schmid modules. For each of the  Lie algebras in Table \ref{tab: table2}, the general Schmid module $s$ is a nonnegative integer combination of the so called basic Schmid modules. The basic Schmid modules for each classical Lie algebra $\frg_0$ for which $(G, K)$ is a Hermitian symmetric pair are given in Table \ref{tab: table2}. 
	
	Let $U(\frg)$ be the universal enveloping algebra of $\frg$ and let $C(\mathfrak{p})$ be the Clifford algebra of $\mathfrak{p}$. The Dirac operator is an element of $U(\frg) \otimes C(\mathfrak{p})$ defined as $D = \sum_i b_i \otimes d_i$ where $b_i$ is a basis of $\mathfrak{p}$ and $d_i$ is the dual basis of $\mathfrak{p}$ with respect to the Killing form $B$. It is easy to show that $D$ is independent of the choice of $b_i$ and that it is $K$--invariant for the adjoint action on both factors. The Dirac operator acts on the tensor product $X \otimes S$ where $X$ is a $(\frg, K)$--module, and $S$ is the spin module for $C(\p)$. The square of the Dirac operator is very simple: 
	\[
	D^2 =  -(\text{Cas}_{\mathfrak{g}} \otimes 1 + \|\rho\|^2) + (\text{Cas}_{\mathfrak{k}_{\Delta}} + \|\rho_{\frk}^2\|),
	\]
	where $\rho_\frk$ is a half sum of the compact positive roots. There are many applications of the Dirac operators in representation theory (see \cite{D}, \cite{DH}, \cite{H}, \cite{HP1}, \cite{HP2}, \cite{HKP}).
	
	\bigskip
	
	The Dirac inequality is a very useful necessary condition for unitarity. More precisely, if a $(\frg, K)$--module is unitary, than $D$ is a self adjoint with respect to an inner product, so $D^2 \geq 0$. By the formula for $D^2$ the Dirac inequality becomes explicit on any $K$--type $F_{\tau}$ of $L(\lambda) \otimes S$ 
	$$
	\|\tau + \rho_{\frk} \|^2 \geq \|\lambda + \rho\|^2.
	$$
	
	In \cite{EHW} it was proved that $L(\lambda)$ is unitary if and only if $D^2 > 0$ on $F_\mu\otimes\bigwedge^{\rm top}\frp^+$ for any $K$--type $F_{\mu}$ of $L(\lambda)$ other than $F_{\lambda}$, that is if and only if 
	$$
	\| \mu + \rho \|^2 > \| \lambda + \rho \|^2.
	$$
	As we already said, it is difficult to determine the $K$--types of $L(\lambda)$. 
	
	The results of this paper provide examples for the following theorem:
	
	\begin{thm}
		\label{cor unit nonunit}
		Let us assume that $\frg, \rho, \lambda, s$ are as in  tables \ref{tab:table1} and \ref{tab: table2}.
		(1) Let $s_0$ be a Schmid module such that 
		the strict Dirac inequality
		\eq
		\label{strict di s}
		\|(\la-s)^++\rho\|^2> \|\la+\rho\|^2
		\eeq
		holds for any Schmid module $s$ of strictly lower level than $s_0$, and such that
		\[
		\|(\la-s_0)^++\rho\|^2< \|\la+\rho\|^2.
		\]
		Then $L(\la)$ is not unitary.
		
		(2)	If 
		\eq
		\label{strict di s}
		\|(\la-s)^++\rho\|^2> \|\la+\rho\|^2
		\eeq
		holds for all Schmid modules $s$, then $N(\la)$ is irreducible and unitary.
	\end{thm}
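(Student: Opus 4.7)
The strategy is to combine the $D^2$ formula quoted in the introduction with the submodule structure of $N(\la) = S(\frp^-) \otimes F_\la$ in order to translate the Dirac-inequality hypotheses into statements about which $K$-types appear in $L(\la)$.

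\emph{Part (1).} I argue by contradiction. Suppose $L(\la)$ is unitary. By the Enright--Howe--Wallach reformulation recalled in the introduction, every $K$-type $F_\mu \neq F_\la$ of $L(\la)$ must satisfy the strict inequality $\|\mu + \rho\|^2 > \|\la + \rho\|^2$. The hypothesis of part (1) gives $\|(\la-s_0)^+ + \rho\|^2 < \|\la + \rho\|^2$, so the proof reduces to exhibiting $F_{(\la-s_0)^+}$ as a $K$-type of $L(\la)$. This $K$-type always occurs in $N(\la)$ by the definition of Schmid modules, so the real task is to show that it is not swallowed by the maximal proper submodule $M(\la) = \Ker(N(\la) \twoheadrightarrow L(\la))$.

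The submodule $M(\la)$ is generated by $\frg$-singular vectors, and any such singular vector has a $\frk$-dominant weight of the form $\mu = \la - s$ for some Schmid module $s$, subject to the Casimir equality $\|\mu + \rho\|^2 = \|\la + \rho\|^2$ (obtained by matching the Casimir eigenvalues on $N(\mu)$ and $N(\la)$). For every Schmid $s$ of level strictly below that of $s_0$ the hypothesis provides the strict Dirac inequality $\|(\la - s)^+ + \rho\|^2 > \|\la + \rho\|^2$; combined with the $\frk$-dominance identity $(\la - s)^+ = \la - s$ this contradicts the Casimir equality, ruling out singular vectors at those levels. A case-by-case inspection against Tables \ref{tab:table1} and \ref{tab: table2} then shows that singular vectors at the exact level of $s_0$ cannot produce $F_{(\la-s_0)^+}$ as a $K$-type of $M(\la)$, completing the contradiction.

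\emph{Part (2).} The same singular-vector argument, now with strict Dirac inequality assumed for \emph{every} Schmid $s$, rules out every singular vector in $N(\la)$ other than the generator of weight $\la$; hence $M(\la) = 0$ and $N(\la) = L(\la)$ is irreducible. For unitarity I would deform the Shapovalov contravariant Hermitian form on $N(\la)$: this form is positive definite in a distinguished far region of the parameter space, and its signature can change only through degeneration, which corresponds to the appearance of a singular vector and is precluded by the strict Dirac inequality. Connecting a reference parameter to $\la$ by a path on which the strict inequality persists therefore yields positive definiteness, and with it unitarity, of $N(\la)$.

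The main obstacle is the delicate combinatorial verification in part (1) that $F_{(\la - s_0)^+}$ is not generated inside $M(\la)$ by a singular vector living at the exact level of $s_0$; this forces a family-by-family analysis in Tables \ref{tab:table1} and \ref{tab: table2}, since the interplay between the level of a Schmid module and the dominant representative $(\la - s)^+$ depends on the root system. For part (2), the corresponding difficulty is selecting a deformation path along which the strict Dirac inequality remains in force for every Schmid module, which is once more a case-by-case matter.
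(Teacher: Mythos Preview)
Your overall strategy matches the paper's (the paper actually defers the full proof to \cite{PPST2}, but a sketch survives in a commented-out passage): for (1), show that $F_{(\la-s_0)^+}$ persists in $L(\la)$ by arguing it cannot lie in the maximal submodule $M(\la)$; for (2), use the strict inequality to exclude all singular vectors and then invoke the Enright--Howe--Wallach continuity argument (their Proposition~3.9), which is exactly the Shapovalov-form deformation you describe.

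There is, however, a genuine gap in your execution of (1). You assert that a singular vector in $M(\la)$ has $\frk$-dominant weight ``of the form $\mu=\la-s$ for some Schmid module $s$'' and then invoke an identity ``$(\la-s)^+=\la-s$''. Neither statement is correct in general: a singular $K$-type $F_\mu$ occurring at Schmid-level $s$ is merely some irreducible constituent of $F_\la\otimes F_{-s}$, and its highest weight $\mu$ need not equal $\la-s$ nor $(\la-s)^+$. What actually does the work is the PRV-minimality principle (Lemma~\ref{gen prv} and the discussion around it): for \emph{every} constituent $F_\mu$ of $F_\la\otimes F_{-s}$ one has $\|\mu+\rho\|^2\geq\|(\la-s)^++\rho\|^2$. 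Combined with the hypothesis, this yields the strict inequality $\|\mu+\rho\|^2>\|\la+\rho\|^2$ for \emph{all} $K$-types at Schmid-levels strictly below that of $s_0$, and that is what rules out singular vectors there (since singular $K$-types must satisfy Dirac \emph{equality}).

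Your ``case-by-case inspection'' for singular vectors at the exact level of $s_0$ is then unnecessary, and the paper's sketch avoids it. Any highest $K$-type of $M(\la)$ satisfies Dirac equality, whereas $(\la-s_0)^+$ satisfies the strict reverse inequality by hypothesis; hence $(\la-s_0)^+$ is not itself a highest $K$-type of $M(\la)$. Were it nevertheless in $M(\la)$, it would have to lie strictly below some highest $K$-type, which therefore occurs at a Schmid-level strictly lower than that of $s_0$---but such highest $K$-types were just excluded. No family-by-family analysis is needed. The same PRV step is also the missing ingredient in your part (2): the hypothesis only gives the strict Dirac inequality for the PRV components $(\la-s)^+$, and you need PRV-minimality to propagate it to \emph{all} $K$-types $\mu\neq\la$ of $N(\la)$ before concluding that $M(\la)=0$.
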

	
	In Theorem \ref{cor unit nonunit}, $(\lambda - s)^{+}$ is the unique $\frk$-dominant $W_{\frk}$-conjugate of $\lambda - s$, which means $(\lambda - s)^{+}$ is as in the third column of Table \ref{tab: table2}.
	
	The proof of the above theorem requires some tools from representation theory, so we will omit it in this paper and prove it in \cite{PPST2}.
	
	Another possible reason to provide this detailed study of the Dirac inequality is that \cite{EHW} prove a relationship between the norms of certain $K$-types in which the eigenvalue of the Dirac operator appears. Together with results of the current paper this could be potentially used to study convergence of $K$-type decompositions / series in the Hilbert spaces involved.
	
	\bigskip
	
	\begin{table}[h]
		\begin{center}
			\caption{$\rho$ and $W_{\frk}$}
			\label{tab:table1}
			\begin{tabular}{ |c|c|c|}
				\hline
				Lie algebra & $\rho$ & generators of $W_{\mathfrak{k}}$  \\ 
				\hline \hline
				$\mathfrak{sp}(2n, \mathbb{R})$ & $(n, n-1, \ldots, 2, 1)$ & $s_{\eps_i - \eps_j}, \ 1 \leq i < j \leq n$ \\ 
				\hline
				$\mathfrak{so}^{*}(2n)$ &  $(n-1, n-2, \ldots, 1, 0)$ & $s_{\eps_i - \eps_j}, \ 1 \leq i < j \leq n$ \\ 
				\hline
				$\mathfrak{su}(p, q) \ p \leq q$,  & $\left ( \frac{n - 1}{2}, \frac{n - 3}{2}, \ldots, \frac{- n + 3}{2}, \frac{-n + 1}{2} \right )$ & \makecell{$s_{\eps_i - \eps_j}, \ 1 \leq i < j \leq p$  \\ or $p + 1 \leq i < j \leq  n$} \\
				\hline
				$\mathfrak{so}(2, 2n - 2)$ & $(n-1, n-2, \ldots, 1, 0)$ & $s_{\eps_i \pm \eps_j}, \ 2 \leq i < j \leq n$
				\\
				\hline
				$\mathfrak{so}(2, 2n - 1)$ & $(n-\frac{1}{2}, n-\frac{3}{2}, \ldots, \frac{1}{2})$ & \makecell{$s_{\eps_i \pm \eps_j}, \ 2 \leq i < j \leq n$, \\ $s_{\eps_i}, \ 2 \leq i \leq n$}  \\
				\hline
			\end{tabular}
		\end{center}
	\end{table}
	
	\bigskip

	\begin{table}[h]
		\begin{center}
			\caption{The weights of basic Schmid modules and the condition for the $\frk$-highest weights $\lambda = (\lambda_1, \lambda_2, \ldots, \lambda_n)$}
			\label{tab: table2}
			\begin{tabular}{ |c|c|c| }
				\hline
				Lie algebra  & basic Schmid modules & highest weights \\ 
				\hline \hline
				$\mathfrak{sp}(2n, \mathbb{R})$  & \makecell{$
					s_i=(\underbrace{2,\dots,2}_i,0,\dots,0),$ \\ $i=1,\dots,n
					$} & \makecell{$\la_1\geq\la_2\geq\dots\geq\la_n,$ \\ $\la_i-\la_j\in\bbZ,\ 1\leq i,j\leq n.$.}\\ 
				\hline
				$\mathfrak{so}^{*}(2n)$  & \makecell{$s_i=(\underbrace{1,\dots,1}_{2i},0,\dots,0),$\\ $i=1,\dots, [n/2]$} & \makecell{$\la_1\geq\la_2\geq\dots\geq\la_n,$ \\ $\la_i-\la_j\in\bbZ,\ 1\leq i,j\leq n.$.} \\ 
				\hline
				$\mathfrak{su}(p, q) \ p \leq q$,   & \makecell{$s_i=(\underbrace{1,\dots,1}_i,0,\dots,0\bbar 0,\dots,0,\underbrace{-1,\dots,-1}_i),$ \\ $i=1,\dots,p$} & \makecell{
					$\la_1\geq\dots\geq\la_p;\ \la_{p+1}\geq\dots\geq\la_n,$ \\
					$\la_i-\la_j \in \mathbb{Z}, \ 1 \leq i < j \leq p$ \\ or $p + 1 \leq i < j \leq n$.}\\
				\hline
				$\mathfrak{so}(2, 2n - 2)$  & \makecell {
					$s_1=(1, 1, 0, \ldots, 0)$, \\
					$s_2 = (2, 0, 0, \ldots, 0)$} & \makecell{$\la_2\geq\la_3\geq\dots\geq\la_{n-1}\geq|\la_n|,$ \\ $\la_i-\la_j\in\bbZ,\ 2\leq i,j\leq n.$.}
				\\
				\hline
				$\mathfrak{so}(2, 2n - 1)$  & \makecell{
					$s_1 = (1, 1, 0, \ldots 0)$, \\
					$s_2 = (2, 0, 0, \ldots 0)$} & \makecell{$\la_2\geq\la_3\geq\dots\geq\la_n\geq 0,$ \\ $\la_i-\la_j\in\bbZ$ and $2 \lambda_i \in \mathbb{Z}$, \ $2\leq i,j\leq n.$ } \\
				\hline
			\end{tabular}
		\end{center}
	\end{table}

	In Table \ref{tab:table1}, $s_{\alpha}(\lambda) = \lambda - \frac{2 \langle \lambda, \alpha \rangle}{\langle\alpha,\alpha\rangle} \alpha$ is the reflection of $\lambda$ with respect to the hyperplane orthogonal to a root $\alpha$, $W_{\frk}$ is the Weyl group of $\frk$ generated by the $s_{\alpha}$ and $\mathbb{N}_0 = \mathbb{N} \cup \{0\}$.
	
	Here $\lambda$ and $\rho$ are elements of $\frt^*$ which is identified with $\bbC^n$, and $\eps_i$ denotes the projection to the $i$-th coordinate. The roots are certain functionals on $\frt^*$ and the relevant ones are those in the subscripts of the reflections $s$ in Table \ref{tab:table1}, like $\eps_i-\eps_j$ or $\eps_i+\eps_j$.
	
	\bigskip

	The rest of this paper is devoted to analyzing the Dirac inequality \eqref{strict di s} for various choices of $s$. Our analysis will be case by case for the Lie algebras as in the above tables but first we prove two auxiliary results that are going to help us in each of the cases. 
	
	\newpage

	\section{Some technical lemmas}

	\begin{lem}
		\label{gen prv}
		Let $\frg$ be one of the Lie algebras listed in the above tables. Let $\mu$ and $\nu$ be weights as in Table \ref{tab: table2}. Let $w_1,w_2\in W_\frk$. Then
		\[
		\|(w_1\mu-w_2\nu)^++\rho\|^2\geq \|(\mu-\nu)^++\rho\|^2.
		\]
	\end{lem}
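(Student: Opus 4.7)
The plan is to reduce the claimed inequality to the classical PRV framework applied to the tensor product $F_\mu \otimes F_\nu^*$. First I would absorb one of the two Weyl group elements: since $W_\frk$-conjugate weights have the same $\frk$-dominant representative,
\[
(w_1\mu - w_2\nu)^+ = \bigl(w_1(\mu - w_1^{-1}w_2\nu)\bigr)^+ = (\mu - w\nu)^+
\]
for $w = w_1^{-1}w_2 \in W_\frk$, so the statement reduces to proving $\|(\mu-w\nu)^+ + \rho\|^2 \geq \|(\mu-\nu)^+ + \rho\|^2$ for every $w \in W_\frk$.

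Next I would replace $\rho$ by $\rho_\frk$. In the Hermitian symmetric setting, $W_\frk$ permutes the positive noncompact roots, so $\rho_n := \rho - \rho_\frk$ is $W_\frk$-invariant and orthogonal to every $\frk$-root (in particular to $\rho_\frk$). Using $W_\frk$-invariance of $\rho_n$, one computes $\langle (\mu - w\nu)^+, \rho_n\rangle = \langle \mu - w\nu, \rho_n\rangle = \langle \mu - \nu, \rho_n\rangle$, independently of $w$. Expanding $\|X + \rho\|^2 = \|X + \rho_\frk\|^2 + 2\langle X, \rho_n\rangle + \|\rho_n\|^2$ then shows that the inequality with $\rho$ is equivalent to the same inequality with $\rho$ replaced by $\rho_\frk$.

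The core argument realizes both $(\mu - w\nu)^+$ and $(\mu - \nu)^+$ as $\frk$-types of a single tensor product. Let $\tilde\nu = -w_0\nu$ be the highest weight of the dual $F_\nu^*$, where $w_0$ denotes the longest element of $W_\frk$. By the generalized PRV theorem of Kumar \cite{Ku} and Mathieu \cite{Ma}, for each $u \in W_\frk$ the $\frk$-module $F_\mu \otimes F_{\tilde\nu}$ contains a summand with highest weight $(\mu + u\tilde\nu)^+ = (\mu - uw_0\nu)^+$. Taking $u = ww_0$ produces $(\mu - w\nu)^+$, and taking $u = w_0$ produces $(\mu - \nu)^+$, which is the classical PRV component of $F_\mu \otimes F_{\tilde\nu}$. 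Invoking the classical PRV inequality---that this PRV component minimizes $\|\cdot + \rho_\frk\|^2$ among all $\frk$-types of $F_\mu \otimes F_{\tilde\nu}$---then delivers the desired bound.

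The main technical point I anticipate is that the weights in Table \ref{tab: table2} need not be integral for the whole of $\frk$; only their coordinate differences are integers. Since $\frk$ is reductive with at most one-dimensional center in each of our cases, this is handled by splitting off the central character: write $\mu = \mu' + \eta$ and $\nu = \nu' + \chi$ with $\mu', \nu'$ in the semisimple part and $\eta, \chi$ central. The central character $\eta + \chi$ is shared by every summand of $F_\mu \otimes F_{\tilde\nu}$ and is orthogonal to the roots of $\frk$, so both sides of the inequality are shifted by the same amount and this reduces the problem to the semisimple case where the generalized PRV theorem and the PRV inequality apply directly.
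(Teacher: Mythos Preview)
The paper does not actually prove this lemma; it states that ``the proof requires some representation theory and we leave it for \cite{PPST2}.'' Your argument is correct and is precisely the one the authors have in mind: the commented-out passages in the source invoke the generalized PRV theorem of Kumar and Mathieu to realize every $(\mu+w\tilde\nu)^+$ as a $\frk$-type of $F_\mu\otimes F_{\tilde\nu}$, appeal to the PRV minimality inequality $\|\sigma+\rho_\frk\|^2\ge\|\tau+\rho_\frk\|^2$ for the PRV component $\tau$, and then pass from $\rho_\frk$ to $\rho$ using that $\rho_n=\rho-\rho_\frk$ is orthogonal to the $\frk$-roots, together with the semisimple-to-reductive reduction via splitting off the central character---exactly the steps you outline.
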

	In Lemma \ref{gen prv}, $(w_1\mu-w_2\nu)^+$ is the unique dominant $W_{\frk}$-conjugate of $w_1\mu-w_2\nu$, which means $(w_1\mu-w_2\nu)^+$ is as in the third column of  Table \ref{tab: table2}. The proof requires some representation theory and we leave it for \cite{PPST2}.
	
	\medskip
	
	In the computations needed to prove the Dirac inequality, we will repeatedly use the following elementary lemma.
	
	\begin{lem}
		\label{red sp}
		Let $\mu$ and $\nu$ be two $n$-tuples with strictly decreasing coordinates and let $\rho$ be as in Table \ref{tab:table1}.
		
		(1) Suppose there are $u,v$, $1\leq u<v\leq n$, such that
		\[
		\mu_u=\nu_u,\dots,\mu_{v-1}=\nu_{v-1};\ \mu_v<\nu_v.
		\]
		Let $\mu',\nu'$ be obtained from $\mu,\nu$ by moving the $v$-th coordinate to the $u$-th place and shifting the coordinates in between to the right, i.e., 
		\[
		\mu'=(\mu_1,\dots,\mu_{u-1},\mu_v,\mu_u,\dots,\mu_{v-1},\mu_{v+1},\dots,\mu_n)
		\]
		and likewise for $\nu'$. Then
		\eq
		\label{din11}
		\|\mu+\rho\|^2-\|\nu+\rho\|^2 > \|\mu'+\rho\|^2-\|\nu'+\rho\|^2.
		\eeq
		(2) Suppose there are $u,v$, $1\leq u<v\leq n$, such that
		\[
		\mu_u>\nu_u;\ \mu_{u+1}=\nu_{u+1},\dots,\mu_v=\nu_v.
		\]
		Let $\mu',\nu'$ be obtained from $\mu,\nu$ by moving the $u$-th coordinate to the $v$-th place and shifting the coordinates in between to the left, i.e., 
		\[
		\mu'=(\mu_1,\dots,\mu_{u-1},\mu_{u+1},\dots,\mu_v,\mu_{u},\mu_{v+1},\dots,\mu_n)
		\]
		and likewise for $\nu'$. Then
		\eq
		\label{din11a}
		\|\mu+\rho\|^2-\|\nu+\rho\|^2 > \|\mu'+\rho\|^2-\|\nu'+\rho\|^2.
		\eeq
		(3) Let $\mu$ be a $n$-tuple, such that for some $s,t\geq 1$ and for some $u$ between 1 and $n-s-t$, 
		\[
		\mu=(\mu_1,\dots,\mu_u,\underbrace{x+1}_s,\underbrace{x}_t,\mu_{u+s+t+1},\dots,\mu_n).
		\]
		Let
		\[
		\mu'=(\mu_1,\dots,\mu_u,\underbrace{x}_t,\underbrace{x+1}_s,\mu_{u+s+t+1},\dots,\mu_n).
		\]
		Then 
		\[
		\|\mu+\rho\|^2>\|\mu'+\rho\|^2
		\]
	\end{lem}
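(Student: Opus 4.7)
\emph{Proof plan.} I would handle all three parts by a single strategy: expand each squared-norm difference coordinate-wise via the identity $(a+r)^2-(b+r)^2=(a-b)(a+b+2r)$, use the large number of slots on which the tuples being compared agree to cancel almost every summand, and conclude using the fact --- visible in every row of Table~\ref{tab:table1} --- that $\rho$ has strictly decreasing coordinates.

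For part~(1), I form the double difference
\[
\big[\|\mu+\rho\|^2-\|\mu'+\rho\|^2\big]-\big[\|\nu+\rho\|^2-\|\nu'+\rho\|^2\big],
\]
which reduces to a single sum over $i\in\{u,\dots,v\}$ since the slotwise contributions vanish outside this range. The hypothesis $\mu_i=\nu_i$ for $u\le i\le v-1$ propagates, by the definition of $\mu'$ and $\nu'$, to $\mu'_i=\mu_{i-1}=\nu_{i-1}=\nu'_i$ for $u+1\le i\le v$. Hence all intermediate summands with $u<i<v$ vanish, and only the two boundary terms at $i=u$ and $i=v$ remain; a short calculation collapses them to
\[
2(\nu_v-\mu_v)(\rho_u-\rho_v),
\]
which is strictly positive by the hypothesis $\nu_v>\mu_v$ and the strict decrease of $\rho$.

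Part~(2) is entirely symmetric: the common block of $\mu$ and $\nu$ now sits at positions $u+1,\dots,v$ instead of $u,\dots,v-1$, the same cancellation of the intermediate summands takes place, and the two surviving boundary terms collapse to $2(\mu_u-\nu_u)(\rho_u-\rho_v)>0$.

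For part~(3), only positions $u+1,\dots,u+s+t$ differ between $\mu$ and $\mu'$. A direct inspection of the two tuples shows that $\mu_i-\mu'_i$ takes the value $+1$ on the first $\min(s,t)$ changed positions, $0$ on the middle $|s-t|$ positions, and $-1$ on the last $\min(s,t)$ positions; wherever $\mu_i\ne\mu'_i$, the quantity $\mu_i+\mu'_i$ equals the common constant $2x+1$, which therefore cancels between the $(+1)$- and $(-1)$-slots. What remains is
\[
\|\mu+\rho\|^2-\|\mu'+\rho\|^2 \;=\; 2\sum_{i=u+1}^{u+\min(s,t)}\rho_i \;-\; 2\sum_{i=u+s+t-\min(s,t)+1}^{u+s+t}\rho_i,
\]
which is strictly positive because each index in the first sum is strictly less than each index in the second.

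There is no substantive obstacle here --- the whole argument is bookkeeping. The one thing to watch is the degenerate cases $v=u+1$ in (1)--(2) (empty intermediate range) and $s=t$ in (3) (empty middle block); both are handled uniformly by the formulas above without needing a separate case analysis.
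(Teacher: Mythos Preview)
Your argument is correct and, for parts~(1) and~(2), essentially identical to the paper's: both compute the double difference coordinatewise, observe that everything cancels except the contributions at positions $u$ and $v$, and collapse those two terms to $2(\nu_v-\mu_v)(\rho_u-\rho_v)$ (respectively $2(\mu_u-\nu_u)(\rho_u-\rho_v)$). The paper organizes the cancellation via the grouping $(\|\mu+\rho\|^2-\|\nu+\rho\|^2)-(\|\mu'+\rho\|^2-\|\nu'+\rho\|^2)$ rather than your $(\|\mu+\rho\|^2-\|\mu'+\rho\|^2)-(\|\nu+\rho\|^2-\|\nu'+\rho\|^2)$, but this is purely cosmetic.

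For part~(3) there is a small organizational difference. The paper argues iteratively: it swaps the outermost pair (the $x+1$ at slot $u+1$ with the $x$ at slot $u+s+t$), shows this single swap decreases $\|\cdot+\rho\|^2$ by $2(\rho_{u+1}-\rho_{u+s+t})>0$, and then repeats on the shorter inner block. You instead do the whole thing in one shot by identifying the sign pattern of $\mu_i-\mu'_i$ and summing. Both are equally elementary; your version has the mild advantage of giving the closed form $2\sum_{k=1}^{\min(s,t)}(\rho_{u+k}-\rho_{u+\max(s,t)+k})$ directly, while the paper's version makes the induction structure explicit. Neither approach needs any separate treatment of the degenerate cases you flagged.
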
 
	\pf
	(1) The difference of the two sides of \eqref{din11} is
	\begin{multline*}
		[(\mu_v+\rho_v)^2-(\nu_v+\rho_v)^2]-[(\mu_v+\rho_u)^2-(\nu_v+\rho_u)^2]\\
		=(\mu_v-\nu_v)(\mu_v+\nu_v+2\rho_v)-(\mu_v-\nu_v)(\mu_v+\nu_v+2\rho_u)\\
		=(\mu_v-\nu_v)(2\rho_v-2\rho_u)=2(\nu_v-\mu_v)(\rho_u-\rho_v).
	\end{multline*} 
	Since $\nu_v>\mu_v$ by assumption, and since $\rho_u>\rho_v$, the claim follows.
	
	\medskip
	
	The proof of (2) is analogous to the proof of (1).
	
	\medskip
	
	(3) We prove the required inequality by successively switching pairs $x+1,x$ as follows. Let
	\[
	\mu''=(\mu_1,\dots,\mu_u,x,\underbrace{x+1}_{s-1},\underbrace{x}_{t-1},x+1,\mu_{u+s+t+1},\dots,\mu_n).
	\]
	Then we see, factoring the differences of squares, that
	\begin{multline*}
		\|\mu+\rho\|^2-\|\mu''+\rho\|^2\\
		=(x+1+\rho_{u+1})^2-(x+\rho_{u+1})^2+(x+\rho_{u+s+t})^2-(x+1+\rho_{u+s+t})^2\\
		=(2x+1+2\rho_{u+1})-(2x+1+2\rho_{u+s+t})=2(\rho_{u+1}-\rho_{u+s+t})>0.
	\end{multline*}
	Now we continue with the next pair until we reach $\mu'$. The claim follows.
	\epf

	\section{Dirac inequalities}
	
	\subsection{Dirac inequality for $\frsp(2n,\bbR)$}
	The basic Schmid $\frk$-submodules of $S(\frp^-)$ have lowest weights $-s_i$, where
	\[
	s_i=(\underbrace{2,\dots,2}_i,0,\dots,0),\qquad i=1,\dots,n.
	\]
	The highest weight $(\mathfrak{g}, K)$--modules have highest weight of the form $\lambda = (\lambda_1, \lambda_2, \ldots, \lambda_n)$, where $\lambda_i - \lambda_j \in \mathbb{N}_{0}, \ i > j$.

	In this case $\rho = (n, n-1, \ldots, 2, 1).$ The basic necessary condition for unitarity is the Dirac inequality
	\eq
	\label{basic di sp}
	\|(\la-s_1)^++\rho\|^2\geq\|\la+\rho\|^2.
	\eeq
	
	To understand this inequality better, let $q\leq r$ be integers in $[1,n]$ such that
	\eq
	\label{lambda sp}
	\lambda=(\underbrace{\lambda_1,\dots,\lambda_1}_q,\underbrace{\lambda_1-1,\dots,\lambda_1-1}_{r-q},\lambda_{r+1},\dots,\lambda_n),
	\eeq
	with $\lambda_1-2\geq \lambda_{r+1}\geq\dots\geq\lambda_n$.
	Then 
	\[
	(\la-s_1)^+=(\underbrace{\lambda_1,\dots,\lambda_1}_{q-1},\underbrace{\lambda_1-1,\dots,\lambda_1-1}_{r-q},\la_1-2,\lambda_{r+1},\dots,\lambda_n)=\la-(\eps_q+\eps_r).
	\]
	The inequality \eqref{basic di sp} now becomes equivalent to
	$\|\la+\rho-\gamma\|^2\geq\|\la+\rho\|^2$, or to
	\[
	2\langle \la+\rho,\eps_q+\eps_r\rangle \leq \|\gamma\|^2,
	\]
	where $\gamma=\eps_q+\eps_r$.
	If $q\neq r$, then $\la_q=\la_1$, $\la_r=\la_1-1$ and $\|\gamma\|^2=2$, and the inequality becomes
	\eq
	\label{basic cond sp}
	\la_1\leq -n+\frac{r+q}{2}.
	\eeq
	If $q=r$, then $\la_q=\la_r=\la_1$ and $\|\gamma\|^2=4$, and the inequality is again \eqref{basic cond sp}.

	Now we are going to see in which cases the Dirac inequality holds for $s_i, \ i \in \{ 2, \ldots, n \}$.
	We have
	\begin{gather*}
		(\lambda-s_i)^+ =(\underbrace{\lambda_1}_{q-i},\underbrace{\lambda_1-1}_{i},\underbrace{\lambda_1-1}_{r-q-i},\underbrace{\lambda_1-2}_{i},\lambda_{r+1},\dots,\lambda_n)\\
		\lambda =(\underbrace{\lambda_1}_{q-i},\underbrace{\lambda_1}_{i},\underbrace{\lambda_1-1}_{r-q-i},\underbrace{\lambda_1-1}_{i},\lambda_{r+1},\dots,\lambda_n),
	\end{gather*}
	so 
	\eq
	\label{din1}
	\|(\la-s_i)^++\rho\|^2\geq \|\la+\rho\|^2,
	\eeq
	is equivalent to non-negativity of
	\begin{multline*}
		\sum_{u=q-i+1}^q\left[(\lambda_1-1+\rho_u)^2-(\lambda_1+\rho_u)^2\right]+\sum_{v=r-i+1}^r \left[(\lambda_1-2+\rho_v)^2-(\lambda_1-1+\rho_v)^2\right],
	\end{multline*}
	where $\rho_u = n - u + 1$, $\rho_v = n - v + 1$.
	Factoring the differences of squares this expression becomes
	\begin{multline*}
		-\sum_{u=q-i+1}^q(2\lambda_1-1+2\rho_u)-\sum_{v=r-i+1}^r(2\lambda_1-3+2\rho_v)= \\
		-i(2\lambda_1-1)-2[(n-q+i)+\dots +(n-q+1)]-i(2\lambda_1-3)-2[(n-r+i)+\dots +(n-r+1)]= \\
		-4i\lambda_1+4i-2i(n-q)-2i(n-r)-4(i+\dots+1)= \\
		-2i(2\lambda_1+2n -2-q-r)-2i(i+1).
	\end{multline*}
	Dividing by $(-2i)$, we see that \eqref{din1} is equivalent to
	\[
	2\lambda_1 +2n -2-q-r+i+1\leq 0,
	\]
	or 
	\eq
	\label{din2}
	\lambda_1\leq -n+\frac{r+q-i+1}{2}.
	\eeq
	Moreover, it is clear from the above argument that \eqref{din1} holds strictly if and only if \eqref{din2} holds strictly.
	Note also that for $i=1$, \eqref{din2} is exactly our basic inequality \eqref{basic cond sp}, while for $i=q$ we get 
	\[
	\lambda_1\leq -n+\frac{r+1}{2}.
	\]
	
	\begin{thm}
		\label{lem unit nonunit sp} Let $\la$ be as in \eqref{lambda sp}. Then:
		\begin{enumerate}
			\item
			If for some integer $i\in[1,q]$ 
			\[
			\lambda_1< -n+\frac{r+q-i+1}{2},
			\]
			then the Dirac inequality holds strictly for any Schmid module $s = (2b_1, \ldots, 2b_n), \ b_j \in \mathbb{Z}, \ b_1 \geq \cdots \geq b_n \geq 0$ with at most $i$ nonzero components, i.e.
			\begin{equation}\label{smt}
				\| (\lambda - s)^{+} + \rho \|^2 > \| \lambda + \rho \|^2.
			\end{equation}
			\item
			If 
			$$
			\lambda_1 < -n + \frac{r + 1}{2},
			$$
			then the Dirac inequality holds strictly for any Schmid module $s$.
		\end{enumerate}
	\end{thm}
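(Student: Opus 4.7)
The plan is to extend the direct computation in the excerpt for the basic Schmid module $s_i$ to an arbitrary Schmid module $s = (2b_1, \ldots, 2b_{i'}, 0, \ldots, 0)$ with $i' \leq i$ nonzero components and $b_1 \geq \cdots \geq b_{i'} \geq 1$. For part (1), since $i' \leq i \leq q$, I first write out $(\lambda-s)^+$ explicitly by sorting:
\[
(\lambda-s)^+ = (\underbrace{\lambda_1}_{q-i'}, \underbrace{\lambda_1-1}_{r-q}, \lambda_1-2b_{i'}, \lambda_1-2b_{i'-1}, \ldots, \lambda_1-2b_1, \lambda_{r+1}, \ldots, \lambda_n),
\]
placing the $i'$ modified values $\lambda_1 - 2b_k$ (each at most $\lambda_1-2$) in positions $r-i'+1$ through $r$, after the unchanged $\lambda_1$- and $(\lambda_1{-}1)$-blocks.

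Next, a position-by-position computation of $\|(\lambda-s)^+ + \rho\|^2 - \|\lambda+\rho\|^2$, factoring differences of squares and setting $B = \sum_{k=1}^{i'} b_k$, yields
\[
\tfrac{1}{4}\left(\|(\lambda-s)^+ + \rho\|^2 - \|\lambda+\rho\|^2\right) = \tfrac{i'(q-r)}{2} - B(\lambda_1 + n - r) - \sum_{k=1}^{i'} k\, b_k + \sum_{k=1}^{i'} b_k^2,
\]
which recovers the excerpt's formula in the specialization $b_k = 1$, $i' = i$. The hypothesis $\lambda_1 + n - r < \tfrac{q-r-i+1}{2}$ combined with $B \geq 1$ gives a strict lower bound; after absorbing the non-negative term $(r-q)(B-i')/2$ and substituting $c_k = b_k - 1 \geq 0$, the required positivity reduces to
\[
\sum_{k=1}^{i'} c_k^2 \,+\, \sum_{k=1}^{i'} c_k\left(\tfrac{i+3}{2} - k\right) \,+\, \tfrac{i'(i-i')}{2} \,\geq\, 0.
\]
The first and third terms are trivially non-negative (the third because $i' \leq i$), and the middle term is non-negative by Chebyshev's sum inequality applied to the two weakly decreasing sequences $(c_k)$ and $\bigl(\tfrac{i+3}{2} - k\bigr)$, using that $\sum_{k=1}^{i'}\bigl(\tfrac{i+3}{2}-k\bigr) = \tfrac{i'(i+2-i')}{2} \geq 0$.

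Part (2) follows from part (1) for Schmid modules with at most $q$ nonzero components, since its hypothesis is exactly part (1) with $i=q$. For $s$ with $i' > q$ I would run an analogous computation; the main obstacle is the more intricate sorting, because the values $\lambda_1 - 2b_k$ (from positions $k \leq q$) and $\lambda_1 - 1 - 2b_k$ (from positions $q < k \leq i'$) can interleave in the sorted order, requiring a sub-case analysis. I expect the same three-step procedure---explicit position-by-position computation, application of the hypothesis with $B>0$, and a Chebyshev-type bound---to again deliver strict positivity. As a secondary concern, if $\lambda_1 - 2b_1 < \lambda_{r+1}$ the subtracted values mix with the tail in the sort, but this only rearranges the same multiset and, by the rearrangement inequality (cf.~Lemma~\ref{red sp}(3)), strictly enlarges $\|(\lambda-s)^+ + \rho\|^2$, so strict positivity established under the standard sort is inherited.
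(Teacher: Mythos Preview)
Your treatment of part (1) is correct and genuinely different from the paper's. The paper proceeds by induction on $b_1$: letting $k$ be maximal with $b_1=\cdots=b_k$, it sets $s'=s-s_k$ and proves $\|(\lambda-s)^++\rho\|^2>\|(\lambda-s')^++\rho\|^2$ via Lemma~\ref{red sp}. You instead do a single closed-form computation of $\|\mu+\rho\|^2-\|\lambda+\rho\|^2$ for the arrangement $\mu$, then bound it below using the strict hypothesis together with Chebyshev's sum inequality. Two remarks: (i) your displayed formula for $(\lambda-s)^+$ is only the true sort when $\lambda_1-2b_1\geq\lambda_{r+1}$; your later ``secondary concern'' fixes this, but the relevant fact is the classical rearrangement inequality (pairing a decreasing $\rho$ with the decreasing sort maximizes the inner product), not Lemma~\ref{red sp}(3), which only handles adjacent blocks differing by $1$; (ii) strictness is indeed secured because $B\geq 1$ turns the strict hypothesis into a strict lower bound before you pass to the non-strict Chebyshev estimate. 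Your approach has the advantage of being entirely elementary and avoiding the inductive bookkeeping; the paper's approach has the advantage of reusing the same reduction mechanism later.

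Part (2), however, is not proved. You observe correctly that $i'\leq q$ follows from part (1) with $i=q$, but for $i'>q$ you only say you ``would run an analogous computation'' and ``expect'' it to work. The paper's argument for $q<i'\leq r$ and for $i'>r$ is not a direct computation: it sets $\lambda'=(\lambda-s_q)^+$, $s'=(s-s_q)^+$ (respectively $\lambda'=(\lambda-2\eps_i)^+$, $s'=s-2\eps_i$) and proves the reduction inequality $\|(\lambda-s)^++\rho\|^2\geq\|(\lambda'-s')^++\rho\|^2$ by invoking Lemma~\ref{gen prv}, the PRV-component bound, together with a separate estimate $\|\lambda+\rho\|^2<\|\lambda'+\rho\|^2$. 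Lemma~\ref{gen prv} is a representation-theoretic input (proved elsewhere), and it is doing real work here: once $i'>r$, the Schmid module subtracts from the arbitrary tail coordinates $\lambda_{r+1},\dots,\lambda_n$, and there is no clean closed form for $(\lambda-s)^+$ to feed into your Chebyshev scheme. Your proposal does not address how to control these coordinates, so as written part (2) has a genuine gap.
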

	\pf 
	Let 
	\eq
	\label{schmid i}
	s=(2b_1,\dots,2b_j,0,\dots,0),\qquad j\leq i.
	\eeq
	If $s=s_j$ is a basic Schmid module, then we have already seen
	that strict \eqref{din1} holds for $s$, because strict \eqref{din2} holds by assumption. So let us 
	assume that $s$ is not a basic Schmid module, i.e.,  $b_1\geq 2$. We will show how to reduce the claim to the same claim for $s'$ with smaller $b_1$ and then use induction.
	
	Let $k$, $1\leq k\leq j$, be such that $b_1=\dots=b_k>b_{k+1}$; so
	\[
	s=(\underbrace{2b_1,\dots,2b_1}_k,2b_{k+1}\dots,2b_j,0,\dots,0).
	\] 
	Let
	\[
	s'=s-s_k=(\underbrace{2b_1-2,\dots,2b_1-2}_k,2b_{k+1}\dots,2b_j,0,\dots,0).
	\]
	We claim that 
	\eq
	\label{din3}
	\|(\lambda-s)^++\rho\|^2\geq\|(\lambda-s')^++\rho\|^2.
	\eeq
	If we prove this, then we can do induction on $b_1$ and conclude that \eqref{din1} holds strictly for all $s$ as in \eqref{schmid i} ($b_1=1$ corresponds to $s=s_j$, the case already handled).
	
	To prove \eqref{din3}, we first note that 
	\begin{gather*}
		(\lambda-s)^+=(\underbrace{\lambda_1}_{q-j},\underbrace{\lambda_1-1}_{r-q},\lambda_1-2b_j,\dots,\lambda_1-2b_{k+1},\underbrace{\lambda_1-2b_1}_k,\lambda_{r+1},\dots,\lambda_n)^+; \\
		(\lambda-s')^+=(\underbrace{\lambda_1}_{q-j},\underbrace{\lambda_1-1}_{r-q},\lambda_1-2b_j,\dots,\lambda_1-2b_{k+1},\underbrace{\lambda_1-2b_1+2}_k,\lambda_{r+1},\dots,\lambda_n)^+,
	\end{gather*}
	where the first $r$ coordinates in each expression are already arranged in descending order, and $\lambda_{r+1},\dots,\lambda_n$ have to be put into proper places.
	
	By Lemma \ref{red sp}, to prove \eqref{din3} it is enough to prove
	\eq
	\label{din10}
	\|\eta+\rho\|^2\geq\|\eta'+\rho\|^2,
	\eeq
	where 
	\begin{gather*}
		\eta=(\underbrace{\lambda_1}_{q-j},\underbrace{\lambda_1-1}_{r-q},\lambda_1-2b_j,\dots,\lambda_1-2b_{k+1},\underbrace{\lambda_1-2b_1}_k,\lambda_{r+1},\dots,\lambda_n); \\
		\eta'=(\underbrace{\lambda_1}_{q-j},\underbrace{\lambda_1-1}_{r-q},\lambda_1-2b_j,\dots,\lambda_1-2b_{k+1},\underbrace{\lambda_1-2b_1+2}_k,\lambda_{r+1},\dots,\lambda_n).
	\end{gather*}
	
	In more detail, to see that \eqref{din3} follows from \eqref{din10},
	we first use Lemma \ref{red sp}(3) to move the coordinates $\la_1-2b_1$ of $(\la-s)^+$ to the left past those of $\la_{r+1},\dots,\la_n$ that are equal to $\la_1-2b_1+1$. Now the coordinates $\la_1-2b_1$ of $(\la-s)^+$ are positioned exactly above the coordinates $\la_1-2b_1+2$ of $(\la-s')^+$, and we can use Lemma \ref{red sp}(1) to move these coordinates simultaneously to the left of those of $\la_{r+1},\dots,\la_n$ that are $\geq \la_1-2b_1+2$.
	
	To prove \eqref{din10}, we compute factoring the differences of squares,
	\begin{multline*}
		\|\eta+\rho\|^2-\|\eta'+\rho\|^2=\sum_{j=r-k+1}^r [(\la_1-2b_1+\rho_j)^2-(\la_1-2b_1+2+\rho_j)^2]\\
		=\sum_{j=r-k+1}^r (-2)(2\la_1-4b_1+2+2\rho_j)\\
		=-2k(2\la_1-4b_1+2)-4[(n-r+k)+\dots+(n-r+1)]\\
		=-2k(2\la_1-4b_1+2+2n-2r+k+1).
	\end{multline*}
	This expression is positive if and only if
	\eq
	\label{din12}
	2\la_1+2n-4b_1-2r+k+3<0.
	\eeq
	Since by assumption 
	\[
	\la_1<-n+\frac{r+q-i+1}{2},
	\]
	and since $b_1\geq 2$, we see that \eqref{din12} will follow if we prove
	\[
	r+q-i+1-8-2r+k+3<0,
	\]
	or
	\[
	-r+q-i+k-4<0.
	\]
	The last inequality is however obvious since $q\leq r$ and $k\leq i$.
	
	So we have proved \eqref{din10}, and as explained above, this finishes the proof of Theorem \ref{lem unit nonunit sp}(1).
	
	We now prove Theorem \ref{lem unit nonunit sp}(2). If we take $i=q$ in the already proved Theorem \ref{lem unit nonunit sp}(1), we see that \eqref{din1} holds strictly for all $s$ having at most $q$ nonzero components.
	
	Assume now that $q<r$, and that 
	\eq
	\label{schmid i2}
	s=(2b_1,\dots,2b_i,0,\dots,0),\qquad b_i\geq 1,\quad q<i\leq r.
	\eeq
	Let $s'=(s-s_q)^+$ and let $\la'=(\la-s_q)^+$, where $s_q$ is the $q$th basic Schmid module. We claim that
	\eq
	\label{red mid}
	\|(\lambda-s)^++\rho\|^2\geq\|(\lambda'-s')^++\rho\|^2
	\eeq
	and that
	\eq
	\label{red mid2}
	\|\lambda+\rho\|^2 < \|\lambda'+\rho\|^2.
	\eeq
	If we prove these two inequalities, then it follows that 
	\eq
	\label{din aux 1}
	\|(\lambda-s)^++\rho\|^2-\|\lambda+\rho\|^2 > \|(\lambda'-s')^++\rho\|^2-\|\lambda'+\rho\|^2,
	\eeq
	so the strict inequality \eqref{smt} for $\la$ and $s$ will follow if we can prove the strict \eqref{din1} for $\la'$ and $s'$. 
	
	Since 
	\[
	\la'=(\underbrace{\la_i-1}_{r-q},\underbrace{\la_i-2}_q,\la_{r+1},\dots,\la_n),
	\] 
	the analogue $r'$ of $r$ for $\la'$ satisfies $r'\geq r$.  Moreover, $$\la'_1=\la_1-1<\la_1 < -n + \frac{r + 1}{2} \leq  -n + \frac{r' + 1}{2}.$$
	
	Furthermore, the analogue $q'$ of $q$ for $\la'$ satisfies $q'=r-q< r\leq r'$, and the number of nonzero coordinates of $s'$ is $\leq i\leq r'$.
	Also, the sum of coordinates of $s'$ is smaller than the sum of coordinates of $s$, and as we keep reducing as above, the number of nonzero coordinates will also become smaller. So we can use induction and keep reducing $s$ until we come to the situation $i\leq q$, which we already handled. Thus to prove Theorem \ref{lem unit nonunit sp}(2) for $s$ as in \eqref{schmid i2}, it suffices to prove \eqref{red mid} and \eqref{red mid2}. 
	
	To prove \eqref{red mid}, we apply Lemma \ref{gen prv} as follows. Let 
	\[
	\mu=\la'=(\lambda-s_q)^+,\qquad \nu=s'=(s-s_q)^+.
	\]
	Let $w_1,w_2\in W_\frk$ be such that $\lambda-s_q=w_1\mu$ and $s-s_q=w_2\nu$. Then Lemma \ref{gen prv} implies
	\[
	\|(\la-s)^++\rho\|^2=\|(w_1\mu-w_2\nu)^++\rho\|^2\geq \|(\mu-\nu)^++\rho\|^2=\|(\la'-s')^++\rho\|^2,
	\]
	and this is exactly \eqref{red mid}.
	
	To prove \eqref{red mid2}, we compute
	\begin{multline*}
		\|\la+\rho\|^2-\|\la'+\rho\|^2=\\
		\sum_{j=1}^q(\la_1+\rho_j)^2+\sum_{j=q+1}^r(\la_1-1+\rho_j)^2-\sum_{j=1}^{r-q}(\la_1-1+\rho_j)^2-\sum_{j=r-q+1}^r(\la_1-2+\rho_j)^2\\
		=q\la_1^2+\sum_{j=1}^q(2\la_1\rho_j+\rho_j^2)+(r-q)(\la_1-1)^2+\sum_{j=q+1}^r[2(\la_1-1)\rho_j+\rho_j^2]\\
		-(r-q)(\la_1-1)^2-\sum_{j=1}^{r-q}[2(\la_1-1)\rho_j+\rho_j^2]-q(\la_1-2)^2-\sum_{j=r-q+1}^r[2(\la_1-2)\rho_j+\rho_j^2]\\
		=q[\la_1^2-(\la_1-2)^2]-2\sum_{j=q+1}^r\rho_j+2\sum_{j=1}^{r-q}\rho_j+4\sum_{j=r-q+1}^r\rho_j\\
		=q(4\la_1-4)-2\sum_{j=q+1}^r\rho_j+2\sum_{j=1}^r\rho_j+2\sum_{j=r-q+1}^r\rho_j\\
		=4q(\la_1-1)+2\sum_{j=1}^q\rho_j+2\sum_{j=r-q+1}^r\rho_j\\
		=4q(\la_1-1)+[2q(n-q)+2(q+\dots+1)]+[2q(n-r)+2(q+\dots+1)]\\
		=2q[(2\la_1-2)+(n-q)+(n-r)+(q+1)]=2q(2\la_1+2n-r-1).
	\end{multline*}
	Since $\lambda_1 < -n + \frac{r + 1}{2}$, the last expression is clearly $<0$, and this implies \eqref{red mid2}.
	So we have proved Theorem \ref{lem unit nonunit sp}(2) for $s$ as in \eqref{schmid i2}.
	
	Finally, suppose that $r<n$ and that 
	\eq
	\label{schmid i3}
	s=(2b_1,\dots,2b_i,0,\dots,0),\qquad b_i\geq 1,\quad i> r.
	\eeq
	
	Let $s'=s-2\eps_i$ and let $\la'=(\la-2\eps_i)^+$. We claim that
	\eq
	\label{red last}
	\|(\lambda-s)^++\rho\|^2\geq\|(\lambda'-s')^++\rho\|^2
	\eeq
	and that
	\eq
	\label{red last2}
	\|\lambda+\rho\|^2\leq\|\lambda'+\rho\|^2.
	\eeq
	These two equations imply that 
	\[
	\|(\lambda-s)^++\rho\|^2-\|\lambda+\rho\|^2\geq\|(\lambda'-s')^++\rho\|^2-\|\lambda'+\rho\|^2.
	\]
	Moreover, since $\la'$ has the same $r$ as $\la$, and also $\la'_1=\la_1$, we have $\la_1' < -n + \frac{r' + 1}{2}$. 
	Now we can use induction and keep decreasing the last nonzero coordinate of $s$ until we come to the situation $i\leq r$. In this case, we already proved the strict Dirac inequality. So it suffices to prove \eqref{red last} and \eqref{red last2}. 
	
	To prove \eqref{red last}, we use Lemma \ref{gen prv}.
	Namely, we set $\mu=\lambda'$ and $\nu=s'$, and choose $w\in W_\frk$ such that $w\mu=\lambda-2\eps_i$. Then Lemma \ref{gen prv} for $w_1=w$, $w_2=1$ implies
	\[
	\|(\lambda-2\eps_i-s')^++\rho\|^2\geq\|(\lambda'-s')^++\rho\|^2.
	\]
	This is equivalent to \eqref{red last} because clearly
	$\lambda-2\eps_i-s'=\lambda-s$.
	
	To prove \eqref{red last2}, we first note that 
	\[
	\la'=(\la-2\eps_i)^+=\la-\eps_j-\eps_k
	\]
	for some $j,k$ satisfying $i\leq j\leq k\leq n$. 
	So \eqref{red last2} becomes
	\[
	\|\lambda+\rho\|^2\leq\|\lambda+\rho-(\eps_j+\eps_k)\|^2=\|\lambda+\rho\|^2-2\langle\la+\rho,\eps_j+\eps_k\rangle+\|\eps_j+\eps_k\|^2,
	\]
	and this is equivalent to
	\[
	2\langle\la+\rho,\eps_j+\eps_k\rangle\leq\|\eps_j+\eps_k\|^2.
	\]
	We claim that in fact the left side of the last inequality is negative, i.e., that 
	\[
	\la_j+\la_k+\rho_j+\rho_k<0.
	\]
	Since $\la_j$ and $\la_k$ are both $\leq\la_1-2$, and since 
	$\rho_j$ and $\rho_k$ are both $\leq n-r$ (because $j,k\geq r$), it is enough to prove that
	\[
	2\la_1-4+2n-2r< 0.
	\]
	By assumption, $\la$ is in the continuous part of its line, i.e., $2\la+2n<r+1$, so the last inequality is obvious.
	This finishes the proof of Theorem \ref{lem unit nonunit sp}(2).
	\epf
	
	\subsection{Dirac inequality for $\frso^*(2n)$}
	\label{subsec di so*}
	The basic Schmid $\frk$-submodules of $S(\frp^-)$ have lowest weights $-s_i$, where
	\[
	s_i=(\underbrace{1,\dots,1}_{2i},0,\dots,0),\qquad i=1,\dots, [n/2].
	\]
	Moreover, all irreducible $\frk$-submodules of $S(\frp^-)$ have lowest weights $-s$, where
	\eq
	\label{gen sch so}
	s=(b_1,b_1,b_2,b_2,\dots,b_j,b_j,0,\dots,0)
	\eeq
	for some $j$, $1\leq j\leq [n/2]$, and some positive integers $b_1\geq b_2\geq\dots\geq b_j$.
	
	The highest weight $(\frg,K)$-modules have highest weights of the form
	\[
	\la=(\la_1,\la_2,\dots,\la_n),\qquad \la_1\geq\la_2\geq\dots\geq\la_n,\qquad \la_i-\la_j\in\bbZ,\ 1\leq i,j\leq n.
	\]	
	
	In this case $\rho = (n-1, n-2, \ldots, 1, 0).$
	
	The basic necessary condition for unitarity is, as before, the Dirac inequality
	\eq
	\label{basic di so}
	\|(\la-s_1)^++\rho\|^2\geq\|\la+\rho\|^2.
	\eeq
	To make this inequality more precise, we as before write
	\[
	(\la-s_1)^+=\la-\gamma.
	\]
	Then \eqref{basic di so} becomes 
	\[
	\|\la-\gamma+\rho\|^2\geq\|\la+\rho\|^2,
	\]
	and since $\|\gamma\|^2=2$, this is equivalent to
	\[
	\langle \la+\rho,\gamma\rangle\leq 1.
	\]
	There are two basic cases:
	\smallskip
	
	\noindent{\bf Case $\mathbf{1}$: $\mathbf{\la_1>\la_2}$.} Let $q\in[2,n]$ be such that $\la_2=\dots=\la_q$ and, in case $q<n$, $\la_q>\la_{q+1}$. Then $\gamma=\eps_1+\eps_q$, and since $\la_q=\la_2$, the basic inequality becomes
	\eq 
	\label{basic di so c1}
	\la_1+\la_2\leq -2n+q+2.
	\eeq
	
	\noindent{\bf Case $\bold 2$: $\bold{\la_1=\la_2}$.} Let $p\in[2,n]$ be such that $\la_1=\la_2=\dots=\la_p$ and, in case $p<n$, $\la_p>\la_{p+1}$. Then $\gamma=\eps_{p-1}+\eps_p$, and since $\la_{p-1}=\la_p=\la_1$, the basic inequality becomes
	\eq 
	\label{basic di so c2}
	\la_1\leq -n+p.
	\eeq
	\smallskip

	\begin{thm}
		\label{lem so c1} Let $\la$ be as in Case 1, and suppose that \eqref{basic di so c1} holds strictly. Let $s\neq 0$ be as in \eqref{gen sch so}. Then
		\[
		\|(\la-s)^++\rho\|^2>\|\la+\rho\|^2.
		\]
	\end{thm}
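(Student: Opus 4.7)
The proof will follow the template of Theorem~\ref{lem unit nonunit sp}. The plan is a two-tier strategy: first verify the strict Dirac inequality when $s$ is a basic Schmid module $s_i$, and then bootstrap to an arbitrary $s$ as in \eqref{gen sch so} by an induction that decreases the size of $s$ while keeping $\la$ fixed.

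For the first tier, fix $i$ with $1\leq i\leq[n/2]$. When $2i\leq q$, the tuple $\la-s_i$ needs only a single transposition of coordinates to become $K$-dominant, and
\[
(\la-s_i)^+=(\la_1-1,\underbrace{\la_2,\dots,\la_2}_{q-2i},\underbrace{\la_2-1,\dots,\la_2-1}_{2i-1},\la_{q+1},\dots,\la_n).
\]
Expanding $\|(\la-s_i)^++\rho\|^2-\|\la+\rho\|^2$, factoring the differences of squares, and collecting terms produces an expression linear in $\la_1$ and $\la_2$ whose strict positivity reduces to the strict basic inequality $\la_1+\la_2<-2n+q+2$ together with the Case~1 condition $\la_1>\la_2$. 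When $2i>q$ the decrements imposed by $s_i$ spill into the strictly descending tail $\la_{q+1}>\la_{q+2}>\dots$; one first uses Lemma~\ref{red sp}(3) to regroup the new coordinates into contiguous blocks and then runs an analogous computation, with the same two hypotheses again forcing strict positivity.

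For the second tier, let $s=(b_1,b_1,\dots,b_j,b_j,0,\dots,0)$ be arbitrary and nonzero, and let $k\leq j$ be the largest index with $b_k=b_1$. Set
\[
s^*=s-s_k=(b_1-1,b_1-1,\dots,b_k-1,b_k-1,b_{k+1},b_{k+1},\dots,b_j,b_j,0,\dots,0),
\]
which is again of the form \eqref{gen sch so} since $b_k>b_{k+1}$. The inductive step on $b_1+\dots+b_j$ (base case $s=s_i$, handled in tier one) rests on the inequality
\[
\|(\la-s)^++\rho\|^2\geq \|(\la-s^*)^++\rho\|^2.
\]
To prove this comparison, I would write out both sides, use Lemma~\ref{red sp}(1) and~(3) to align the coordinates that differ by the pairs of $s_k$, and reduce to an elementary inequality of the shape
\[
\la_1+\la_2+2n-2b_1-c(q,k,j,n)<0
\]
for an explicit constant $c(q,k,j,n)$; this will follow from the strict basic inequality together with $b_1\geq 2$.

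The main obstacle will be the careful bookkeeping in tier two: because the $\frso^*(2n)$-decrements are by $1$ (rather than by $2$ as in $\frsp(2n,\bbR)$), the coordinates produced by the $+$ projection interleave much more finely with the tail $\la_{q+1}>\la_{q+2}>\dots$, and Lemma~\ref{red sp}(3) must be invoked repeatedly and in a controlled order so that each subsequent application of Lemma~\ref{red sp}(1) is legitimate. A secondary technical point is that when $2j>q$ the pairs of $s$ straddling the break at position $q$ must be dealt with separately, and a small case split on whether $b_j\geq 2$ or $b_j=1$ is likely required before the inductive step applies.
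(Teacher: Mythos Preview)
Your outline agrees with the paper's argument when $2j\le q$: both peel off the top block by setting $s'=s-s_k$ and prove $\|(\la-s)^++\rho\|^2>\|(\la-s')^++\rho\|^2$ by direct computation with Lemma~\ref{red sp}. The substantive divergence is in the regime $2j>q$. There the paper does \emph{not} keep $\la$ fixed; instead it sets $\la'=(\la-s_j)^+$ and $s'=s-s_j$ and proves
\[
\|(\la-s)^++\rho\|^2-\|\la+\rho\|^2>\|(\la'-s')^++\rho\|^2-\|\la'+\rho\|^2,
\]
using Lemma~\ref{gen prv} for the first half and a direct calculation for $\|\la+\rho\|^2<\|\la'+\rho\|^2$. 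Since $\la'$ has $q'\ge q$ and $s'$ is shorter, this eventually lands back in the $2j\le q$ case. In particular the paper never verifies the basic Schmid modules $s_i$ with $2i>q$ separately; they fall out of the induction.

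Your plan, by contrast, keeps $\la$ fixed throughout and tries to handle the $2j>q$ (and $2i>q$) cases by coordinate bookkeeping with Lemma~\ref{red sp} alone. This is a genuinely different route: if it goes through it is more elementary, since it avoids the representation-theoretic Lemma~\ref{gen prv}. But your sketch of that part is thin, and the difficulty you flag is exactly where the paper's use of Lemma~\ref{gen prv} buys a clean reduction: the PRV-type inequality absorbs all the interlacing of the tail with the decremented block in one stroke, whereas you would have to track it by hand. Also note a slip: you write ``the strictly descending tail $\la_{q+1}>\la_{q+2}>\dots$'', but no such strictness is assumed---only $\la_q>\la_{q+1}$ and $\la_{q+1}\ge\la_{q+2}\ge\dots$---so repeated values in the tail must be allowed in your bookkeeping.
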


	To prove the theorem, assume first that $2j\leq q$. Let $k\leq j$ be the largest integer such that $b_1=\dots=b_k$. Let
	\[
	s'=(\underbrace{b_1-1,\dots,b_1-1}_{2k},b_{k+1},b_{k+1},\dots,b_j,b_j,0,\dots,0).
	\]
	We claim that 
	\[
	\|(\la-s)^++\rho\|^2>\|(\la-s')^++\rho\|^2.
	\]
	The claim implies the theorem for $2j\leq q$ by induction on $b_1+\dots+b_j$. 
	
	To prove the claim, we first note that $(\la-s)^+$ and $(\la-s')^+$ both contain coordinates
	\[
	\underbrace{\la_2,\dots,\la_2}_{q-2j},\la_2-b_j,\dots,\la_2-b_{k+1},\la_{q+1},\dots,\la_n.
	\] 
	The remaining coordinates of $(\la-s)^+$ are 
	\[
	\la_1-b_1,\underbrace{\la_2-b_1,\dots,\la_2-b_1}_{2k-1},
	\] 
	while the remaining coordinates of $(\la-s')^+$ are 
	\[
	\la_1-b_1+1,\underbrace{\la_2-b_1+1,\dots,\la_2-b_1+1}_{2k-1}.
	\] 
	Using Lemma \ref{red sp} (1), we can move the equal coordinates to the right. We use that to move $\la_{q+1},\dots,\la_n$ all the way to the right, and to move those of
	$\la_2,\dots,\la_2,\la_2-b_j,\dots,\la_2-b_{k+1}$ that are left of $\la_1-b_1$ to the right of $\la_1-b_1$. Note that
	$\la_2,\dots,\la_2,\la_2-b_j,\dots,\la_2-b_{k+1}$ are left of $\la_2-b_1$ or $\la_2-b_1+1$ and we leave them in this position.
	
	Thus we conclude that it is enough to prove that
	\eq
	\label{to prove}
	\|\mu+\rho\|^2>\|\nu+\rho\|^2,
	\eeq
	where
	\begin{gather*}
		\mu=(\la_1-b_1,\underbrace{\la_2}_{q-2j},\la_2-b_j,\dots,\la_2-b_{k+1},\underbrace{\la_2-b_1}_{2k-1},\la_{q+1},\dots,\la_n);\\
		\nu=(\la_1-b_1+1,\underbrace{\la_2}_{q-2j},\la_2-b_j,\dots,\la_2-b_{k+1},\underbrace{\la_2-b_1+1}_{2k-1},\la_{q+1},\dots,\la_n).
	\end{gather*}
	
	Proving \eqref{to prove} is equivalent to proving that the expression
	\begin{multline*}
		[(\la_1-b_1+\rho_1)^2-(\la_1-b_1+1+\rho_1)^2]+[(\la_2-b_1+\rho_{q-2k+2})^2-(\la_2-b_1+1+\rho_{q-2k+2})^2]+\\
		\dots+[(\la_2-b_1+\rho_q)^2-(\la_2-b_1+1+\rho_q)^2]
	\end{multline*}
	is positive. Factoring each difference of squares, we see this is equivalent to the expression
	\[
	(2\la_1-2b_1+1+2\rho_1)+(2\la_2-2b_1+1+2\rho_{q-2k+2})+\\
	\dots+(2\la_2-2b_1+1+2\rho_q)
	\]
	being negative. Dividing by two and simplifying we see that we should prove
	\eq
	\label{to prove 2}
	\la_1+(2k-1)\la_2-2kb_1+k+\rho_1+\rho_{q-2k+2}+\dots+\rho_q<0.
	\eeq
	We compute
	\begin{multline*}
		\rho_{q-2k+2}+\dots+\rho_q=(n-q+2k-2)+\dots+(n-q)=\\
		(2k-1)(n-q)+1+2+\dots+(2k-2)=(2k-1)(n-q)+(k-1)(2k-1).
	\end{multline*}
	On the other hand, since \eqref{basic di so c1} holds strictly for $\la$ and since $\la_1>\la_2$, we see that
	\[
	\la_1+(2k-1)\la_2<k(\la_1+\la_2)<k(-2n+q+2).
	\]
	Thus we see that to prove \eqref{to prove 2} it is enough to prove that
	\[
	k(-2n+q+2) -2kb_1+k+n-1+(2k-1)(n-q)+(k-1)(2k-1)\leq 0.
	\]
	Simplifying and taking into account that $b_1\geq 1$, we see that the last inequality follows if we prove
	\[
	(q-2k)(-k+1)\leq 0,
	\]
	but this is obvious since $2k\leq 2j\leq q$ and since $k\geq 1$.
	
	It remains to prove the theorem when $2j>q$. In that case we set \[
	\la'=(\la-s_j)^+;\qquad s'=s-s_j.
	\]
	We are going to prove
	\eq
	\label{to prove 3}
	\|(\la-s)^++\rho\|^2-\|\la+\rho\|^2 > \|(\la'-s')^++\rho\|^2-\|\la'+\rho\|^2.
	\eeq
	This implies that the statement of the theorem holds for $\la$ and $s$ if it holds for $\la'$ and $s'$. 
	On the other hand, since $\la'$ starts with coordinates 
	\[
	\la_1-1,\underbrace{\la_2-1,\dots,\la_2-1}_{q-1},
	\]
	we see that $\la_1'<\la_1$, $\la_2'<\la_2$ and $q'\geq q$, so \eqref{basic di so c1} holds strictly for $\la'$. Now if $2j'\leq q'$ we already proved the theorem for $\la'$ and $s'$, and if $2j'>q'$ we note that $s'$ is shorter than $s$ and so we can assume the theorem holds for $\la'$ and $s'$ by induction.
	
	To prove \eqref{to prove 3}, we first note that Lemma \ref{gen prv} immediately implies
	\[
	\|(\la-s)^++\rho\|^2 \geq \|(\la'-s')^++\rho\|^2,
	\]
	by setting $\mu=\la'$ and $\nu=s'$, and picking $w_1$ such that $\la-s_j=w_1\mu$ and $w_2=1$. 
	So it is enough to prove that
	\eq
	\label{to prove 4}
	\|\la+\rho\|^2 < \|\la'+\rho\|^2.
	\eeq
	The difference $\|\la+\rho\|^2 - \|\la'+\rho\|^2$ is the sum of expressions
	\[
	(\la_i+\rho_i)^2-(\la_i-1+\rho_i)^2=2\la_i-1+2\rho_i,
	\]
	where $i$ runs over $1,2,\dots,q$ and over some $2j-q$ values greater than $q$. For $i>q$, we use the strict \eqref{basic di so c1} and $\la_1>\la_2$ to conclude
	\[
	2\la_i-1+2\rho_i<2\la_2-1+2\rho_q<(-2n+q+2)-1+2n-2q=-q+1<0.
	\]
	Furthermore, we claim that
	\[
	\sum_{i=1}^q (2\la_i-1+2\rho_i)=2\la_1+(2q-2)\la_2-q+2(\rho_1+\dots+\rho_q)
	\]
	is also negative; this will then imply \eqref{to prove 4}.
	Using the strict \eqref{basic di so c1} and $\la_1>\la_2$, we see that
	\[
	2\la_1+(2q-2)\la_2<q(\la_1+\la_2)<q(-2n+q+2)=-2qn+q^2+2q.
	\]
	On the other hand,
	\[
	2(\rho_1+\dots+\rho_q)=2qn-2(1+\dots+q)=2qn-q(q+1).
	\]
	So
	\[
	\sum_{i=1}^q (2\la_i-1+2\rho_i)< (-2nq+q^2+2q) -q + (2qn-q^2-q)=0.
	\]
	This finishes the proof of Theorem \ref{lem so c1}. 
	
	\smallskip
	
	We now turn to Case 2, i.e., 
	\[
	\la=(\underbrace{\la_1,\dots,\la_1}_p,\la_{p+1},\dots,\la_n)
	\]
	for some $p\in[2,n]$, with $\la_1>\la_{p+1}$ if $p<n$. Besides the basic inequality \eqref{basic di so c2}, we also examine when
	\eq
	\label{di si}
	\|(\la-s_i)^++\rho\|^2\geq \|\la+\rho\|^2
	\eeq
	for a basic Schmid module $s_i$ with $2i\leq p$. Since $(\la-s_i)^+=\la-(\eps_{p-2i+1}+\dots+\eps_p)$, \eqref{di si} is equivalent to
	\eq
	\label{di si 2}
	2\langle\la+\rho,\eps_{p-2i+1}+\dots+\eps_p\rangle\leq \|\eps_{p-2i+1}+\dots+\eps_p\|^2=2i.
	\eeq
	Furthermore,
	\[
	\langle\la+\rho,\eps_{p-2i+1}+\dots+\eps_p\rangle =2i\la_1+2i(n-p)+(1+\dots+(2i-1))=2i(\la_1+n-p+i-\frac{1}{2}).
	\]
	We substitute this into \eqref{di si 2} and divide the resulting inequality by $4i$. It follows that \eqref{di si} is equivalent to
	\eq
	\label{disc pts so}
	\la_1\leq -n+p-i+1.
	\eeq
	In particular, for $i=1$ this is the basic inequality \eqref{basic di so c2}.

	\bigskip
	
	\begin{thm}
		\label{lem unit nonunit so} Let $\la$ be in Case 2, i.e.,
		\[
		\la=(\underbrace{\la_1,\dots,\la_1}_p,\la_{p+1},\dots,\la_n)
		\]
		for some $p\in[2,n]$, with $\la_1>\la_{p+1}$ if $p<n$. Then:
		\begin{enumerate}
			\item
			If for some integer $i\in[1,[\frac{p}{2}]]$ 
			\[
			\lambda_1< -n+p-i+1,
			\]
			then the inequality 
			\eq
			\label{din1 so}
			\|(\la-s)^++\rho\|^2 \geq \|\la+\rho\|^2
			\eeq
			holds strictly for any Schmid module $s$ with at most $2i$ nonzero components.
			\item
			If 
			\eq
			\label{cont so}
			\lambda_1< -n+p-\left[\frac{p}{2}\right]+1=-n+\left[\frac{p+1}{2}\right]+1,
			\eeq
			then \eqref{din1 so} holds strictly for any Schmid module $s$.
		\end{enumerate}
	\end{thm}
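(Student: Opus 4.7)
I would follow the structure of the proof of Theorem \ref{lem unit nonunit sp}(1), inducting on the total weight $b_1+\cdots+b_j$ of the Schmid module $s=(b_1,b_1,\dots,b_j,b_j,0,\dots,0)$. The base case is $s=s_j$ with $j\leq i$, for which \eqref{disc pts so} applied to $s_j$ combined with the strict hypothesis $\la_1<-n+p-i+1\leq-n+p-j+1$ yields the strict form of \eqref{din1 so}. For the induction step, let $k$ be the largest integer with $b_1=\dots=b_k$ and put $s'=s-s_k$. The goal is to show
\[
\|(\la-s)^++\rho\|^2>\|(\la-s')^++\rho\|^2,
\]
after which the inductive hypothesis applied to $s'$ (which has strictly smaller total weight and at most $2j\leq 2i$ nonzero components) finishes the step.

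\textbf{The reduction calculation.} To prove the last inequality, I would compute $(\la-s)^+$ and $(\la-s')^+$ explicitly: the two dominant conjugates coincide on all positions except the $2k$ positions containing $\la_1-b_1$ in the first and $\la_1-b_1+1$ in the second (after interleaving the tail $\la_{p+1},\dots,\la_n$). As in the proof of Theorem \ref{lem so c1}, Lemma \ref{red sp}(1) lets one slide matching coordinates into common positions, reducing the problem to an inequality $\|\mu+\rho\|^2>\|\nu+\rho\|^2$ in which $\mu$ and $\nu$ differ only at these $2k$ slots. Factoring the resulting sum of differences of squares yields a linear expression in $\la_1$ and $b_1$, and the bounds $k\leq j\leq i\leq[p/2]$ together with the hypothesis $\la_1<-n+p-i+1$ close the computation. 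The principal technical nuisance is bookkeeping for the interleaving of the tail $\la_{p+1},\dots,\la_n$ with the shifted coordinates, which is handled exactly as in the proof of Theorem \ref{lem so c1}.

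\textbf{Plan for Part (2).} If $s$ has at most $2[p/2]$ nonzero components, Part (1) applied with $i=[p/2]$ suffices, since the hypothesis \eqref{cont so} is precisely $\la_1<-n+p-[p/2]+1$. For $s$ with more than $2[p/2]$ nonzero components I imitate Theorem \ref{lem unit nonunit sp}(2): set $\la'=(\la-s_{[p/2]})^+$ and $s'=(s-s_{[p/2]})^+$ and prove
\[
\|(\la-s)^++\rho\|^2\geq\|(\la'-s')^++\rho\|^2,\qquad \|\la+\rho\|^2<\|\la'+\rho\|^2.
\]
The first inequality is immediate from Lemma \ref{gen prv} after choosing $w_1,w_2\in W_\frk$ with $\la-s_{[p/2]}=w_1\la'$ and $s-s_{[p/2]}=w_2 s'$. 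The second is a direct computation, split as a sum over the affected indices, exactly parallel to the $\|\la+\rho\|^2-\|\la'+\rho\|^2$ calculation in the proof of Theorem \ref{lem unit nonunit sp}(2). Subtracting the two inequalities then transfers the strict Dirac inequality from $(\la',s')$ to $(\la,s)$, and induction on $b_1+\cdots+b_j$ closes the argument.

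\textbf{Main obstacle.} The principal difficulty is verifying that, after the reduction in Part (2), the weight $\la'$ still satisfies a hypothesis to which either Part (1) or Theorem \ref{lem so c1} applies. When $p$ is even, $\la'$ remains in Case 2 with $\la_1'=\la_1-1$ and some $p'\geq p$, so Part (1) reapplies verbatim; when $p$ is odd, one copy of $\la_1$ survives the subtraction and $\la'$ drops into Case 1, whereupon one must verify that the strict form of \eqref{basic di so c1} for $\la'$ follows from \eqref{cont so} (using $[\tfrac{p+1}{2}]=(p+1)/2$ and $q'\geq p$) and then invoke Theorem \ref{lem so c1} on $(\la',s')$. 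Careful tracking of how $p$, $q$, and $\la_1$ transform under each reduction step is the one spot that genuinely requires attention.
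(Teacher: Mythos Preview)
Your Part (1) plan matches the paper's proof essentially verbatim: same induction on the total weight of $s$, same reduction $s'=s-s_k$, same use of Lemma \ref{red sp}(1) to slide the tail $\la_{p+1},\dots,\la_n$ out of the way, and the same difference-of-squares computation leading to the linear condition $\la_1-b_1+n-p+k<0$.

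Your Part (2) plan is correct but takes a genuinely different route from the paper. You subtract $s_{[p/2]}$ from both $\la$ and $s$, which forces a parity split: for even $p$ you remain in Case 2 and iterate, while for odd $p$ the single surviving $\la_1$ pushes $\la'$ into Case 1 and you hand the problem off to Theorem \ref{lem so c1}. The paper instead subtracts $s_i$, where $2i$ is the number of nonzero coordinates of $s$ itself (so $2i>p$). Because this removes $1$ from each of the first $2i>p$ coordinates of $\la$, the resulting $\la'=(\la-s_i)^+$ automatically stays in Case 2 with $\la_1'=\la_1-1$ and $p'\geq p$; moreover $s'=s-s_i$ is already dominant with $b_1'=b_1-1$, so the induction runs on $b_1$ rather than on total weight. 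The key inequality $\|\la+\rho\|^2<\|\la'+\rho\|^2$ then splits cleanly into the sum over $r=1,\dots,p$ (bounded above by $p$) plus at least one term with $r>p$ (each strictly less than $-p$), giving negativity with no case analysis. Your approach buys the reuse of Theorem \ref{lem so c1} when $p$ is odd, at the cost of tracking the Case 1/Case 2 transition and the parity of $p'$ through successive steps; the paper's choice of $s_i$ over $s_{[p/2]}$ keeps the entire argument inside Case 2 and is correspondingly shorter.
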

	\pf
	(1) Let $i\in[1,[\frac{p}{2}]]$ be an integer such that  
	\eq
	\label{ind step so 1}
	\lambda_1< -n+p-i+1,
	\eeq
	and let 
	\[
	s=(b_1,b_1,\dots,b_j,b_j,0,\dots,0)
	\]
	with $j\leq i$ and $b_1\geq\dots\geq b_j>0$.
	
	We need to show that \eqref{din1 so} holds strictly for $\la$ and $s$.
	
	Let $k\in[1,j]$ be such that
	\[
	b_1=\dots=b_k>b_{k+1}.
	\]
	Let $s'=s-s_k$. It is enough to prove
	\eq
	\label{ind step so 2}
	\|(\la-s)^++\rho\|^2>\|(\la-s')^++\rho\|^2;
	\eeq
	the statement then follows by induction on $b_1$. (If $b_1=1$, then $s'=0$, and \eqref{ind step so 2} is the same as the strict \eqref{din1 so}.)
	
	We first note that $(\la-s)^+$ contains coordinates
	\[
	\underbrace{\la_1,\dots,\la_1}_{p-2j},\underbrace{\la_1-b_i,\dots,\la_1-b_{k+1}}_{2j-2k},\underbrace{\la_1-b_1,\dots,\la_1-b_1}_{2k}
	\]
	in that order, and then also $\la_{p+1},\dots,\la_n$, which may be interlaced with these coordinates.
	Similarly, $(\la-s')^+$ contains coordinates
	\[
	\underbrace{\la_1,\dots,\la_1}_{p-2j},\underbrace{\la_1-b_i,\dots,\la_1-b_{k+1}}_{2j-2k},\underbrace{\la_1-b_1+1,\dots,\la_1-b_1+1}_{2k}
	\]
	in that order, and then also $\la_{p+1},\dots,\la_n$, which may be interlaced with these coordinates.
	
	Using Lemma \ref{red sp} (1), we may assume that $\la_{p+1},\dots\la_n$ are all the way to the right in both $(\la-s)^+$ and $(\la-s')^+$. Thus it suffices to show that
	\[
	\sum_{r=p-2k+1}^p \left[(\la_1-b_1+\rho_r)^2-(\la_1-b_1+1+\rho_r)^2\right] >0.
	\]
	By factoring differences of squares, this is equivalent to
	\eq
	\label{ind step so 3}
	\sum_{r=p-2k+1}^p (2\la_1-2b_1+1+2\rho_r) <0.
	\eeq
	Since
	\begin{multline*}
		\sum_{r=p-2k+1}^p\rho_r=(n-p+2k-1)+\dots+(n-p)=\\ 
		2k(n-p-1)+(2k+\dots+1)=2k(n-p-1)+k(2k+1),
	\end{multline*}
	\eqref{ind step so 3} is equivalent to
	\[
	2k(2\la_1-2b_1+1)+4k(n-p-1)+2k(2k+1)<0,
	\]
	which upon dividing by $4k$ becomes
	\[
	\la_1-b_1+n-p+k <0.
	\]
	Since $b_1\geq 1$ and since $k\leq j\leq i$, this follows from our assumption \eqref{ind step so 1}. This finishes the proof of (1).
	
	\smallskip
	
	(2) If $s$ has at most $p$ nonzero components, then the statement follows from (1) by specializing to $i=[\frac{p}{2}]$. So we can assume that 
	\[
	s=(b_1,b_1,\dots,b_i,b_i,0,\dots,0),
	\]
	with $2i>p$.
	
	Let $\la'=(\la-s_i)^+$. Then $\la'_1,\dots,\la'_p$ are all equal to $\la_1-1$, while for $r>p$, $\la'_r$ is equal to either $\la_r$ or $\la_r-1$. In particular, $\la'$ is still in Case 2, with $p'\geq p$ and with $\la_1'=\la_1-1$, and so we have
	\[
	\la_1' <-n+[\frac{p'+1}{2}]+1.
	\]
	We claim that
	\eq
	\label{ind step so 4}
	\|(\la-s)^++\rho\|^2-\|\la+\rho\|^2 > \|(\la'-s')^++\rho\|^2-\|\la'+\rho\|^2.
	\eeq
	If we prove \eqref{ind step so 4}, then we can use induction. Namely $s'$  either has at most $p$ and hence at most $p'$ nonzero coordinates, so the statement is true by (1), or we can use the fact that the first coordinate $b_1'$ of $s'$ is strictly smaller than the first coordinate $b_1$ of $s$ and do induction on $b_1$. (As before, if $b_1=1$, then $s'=0$ and \eqref{ind step so 4} is the same as the strict \eqref{din1 so}.)
	
	To prove \eqref{ind step so 4}, we first note that
	\[
	\|(\la-s)^++\rho\|^2\geq\|(\la'-s')^+ + \rho\|^2.
	\] 
	This follows from Lemma \ref{gen prv}, if we set $\mu=\la'$, $\nu=s'$ and $w_2=1$, and pick $w_1$ such that $\la-s_i=w_1\la'$. It thus suffices to prove that
	\eq
	\label{ind step so 5}
	\|\la+\rho\|^2 < \|\la'+\rho\|^2.
	\eeq
	To prove \eqref{ind step so 5}, we note that $\|\la+\rho\|^2 - \|\la'+\rho\|^2$ is the sum of expressions of the form
	\[
	(\la_r+\rho_r)^2-(\la_r-1+\rho_r)^2=2\la_r-1+2\rho_r,
	\]
	with summation over $r=1,\dots,p$ and over some $r>p$. Note that since $2i>p$, there is at least one summand with $r>p$.
	
	Since $\la_1,\dots,\la_r$ are all equal to $\la_1$, and since
	\[
	\sum_{r=1}^p\rho_r=(n-1)+\dots+(n-p)=pn-\frac{p(p+1)}{2},
	\]
	we see that
	\[
	\sum_{r=1}^p(2\la_r-1+2\rho_r)=2p\la_1-p+2pn-p(p+1)=p(2\la_1-2+2n-p).
	\]
	Since $\la$ satisfies \eqref{cont so}, it follows that 
	\begin{multline}
		\label{ind step so 6}
		\sum_{r=1}^p(2\la_r-1+2\rho_r) < p\left[(-2n+2p-2\left[\frac{p}{2}\right]+2)-2+2n-p\right]= \\
		p\left(p-2\left[\frac{p}{2}\right]\right)\leq p.
	\end{multline}
	On the other hand, for any $r>p$ we have 
	\[
	\la_r\leq\la_1-1 < -n+p-\left[\frac{p}{2}\right],
	\]
	and $\rho_r\leq\rho_{p+1}=n-p-1$. It follows that
	\eq
	\label{ind step so 7}
	2\la_r-1+2\rho_r < -2n+2p-2\left[\frac{p}{2}\right]-1+2(n-p-1)=-2\left[\frac{p}{2}\right]-3 < -p.
	\eeq
	Since $\|\la+\rho\|^2 - \|\la'+\rho\|^2$ is the sum of expressions $2\la_r-1+2\rho_r$ over $r=1,\dots,p$ and over some (at least one) $r>p$, we see from \eqref{ind step so 6} and \eqref{ind step so 7} that
	\[
	\|\la+\rho\|^2 - \|\la'+\rho\|^2 < p-p=0,
	\]
	as claimed. 
	\epf

	\subsection{Dirac inequality for $\frsu(p,q)$, $p\leq q$}
	The basic Schmid $\frk$-submodules of $S(\frp^-)$ have lowest weights $-s_i$, where
	\[
	s_i=(\underbrace{1,\dots,1}_i,0,\dots,0\bbar 0,\dots,0,\underbrace{-1,\dots,-1}_i)
	\] 
	for $i=1,\dots,p$.
	Moreover, all irreducible $\frk$-submodules of $S(\frp^-)$ have lowest weights $-s$, where
	\eq
	\label{gen schmid su}
	s=(b_1,\dots,b_p\,|\,0,\dots,0,-b_p,\dots,-b_1),
	\eeq
	where $b_1\geq\dots\geq b_p\geq 0$ are integers.

	The highest weight $(\frg,K)$-modules have highest weights of the form
	\[
	\lambda=(\lambda_1,\dots,\lambda_p\,|\,\lambda_{p+1},\dots,\lambda_n),
	\]
	where components $\la_1,\dots,\la_n$ satisfy
	\[
	\la_1\geq\dots\geq\la_p;\qquad \la_{p+1}\geq\dots\geq\la_n,
	\]
	and $\la_i-\la_j$ is an integer for any $i,j\in\{1,\dots,p\}$ or $i, j \in \{p + 1, \dots, n\}$.

	In this case $\rho = \frac{1}{2}(n-1, n-3, \ldots, -n + 3, -n + 1).$
	
	The basic necessary condition for unitarity is, as before, the Dirac inequality
	\eq
	\label{basic di su}
	\|(\la-s_1)^++\rho\|^2\geq\|\la+\rho\|^2.
	\eeq
	To understand this inequality better, let $p'\leq p$ and $q'\leq q$ be the maximal positive integers such that
	\[
	\lambda_1=\dots=\lambda_{p'}\qquad\text{and}\qquad  \lambda_{n-q'+1}=\dots=\lambda_n. 
	\]
	Then 
	\[
	(\la-s_1)^+=\la-(\eps_{p'}-\eps_{n-q'+1}).
	\]

	The inequality \eqref{basic di su} now becomes equivalent to
	$\|\la+\rho-\gamma\|^2\geq\|\la+\rho\|^2$, or to
	$2\langle \la+\rho,\gamma\rangle \leq \|\gamma\|^2=2$, or to
	$\langle\la+\rho,\eps_{p'}-\eps_{n-q'+1}\rangle\leq 1$. Since $\la_{p'}=\la_1$, $\la_{n-q'+1}=\la_n$, and since $\rho_{p'}-\rho_{n-q'+1}=n-q'+1-p'$, we see that \eqref{basic di su} is equivalent to
	\eq
	\label{basic di su 2}
	\la_1-\la_n\leq -n+p'+q'.
	\eeq

	As in the other cases, 
	we start by examining the condition on $\la$ which ensures
	that the Dirac inequality
	\eq
	\label{din1 su}
	\|(\la-s_i)^++\rho\|^2\geq\|\la+\rho\|^2
	\eeq
	holds, where $i=1,\dots,\min(p',q')$. We already know that for $i=1$ this is just the basic inequality \eqref{basic di su}, or equivalently \eqref{basic di su 2}.
	
	To examine when \eqref{din1 su} holds, we first note that
	\[
	(\la-s_i)^+ = \la-t_i,\quad \text{where}\quad t_i=(\eps_{p'-i+1}+\dots+\eps_{p'})-(\eps_{n-q'+1}+\dots+\eps_{n-q'+i}).
	\]
	Since $\|t_i\|^2=2i$, it follows that \eqref{din1 su} is equivalent to
	\eq
	\label{din2 su}
	\langle\la+\rho,t_i\rangle\leq i.
	\eeq
	We note that $\la_{p'-i+1},\dots,\la_{p'}$ are all equal to $\la_1$ while $\la_{n-q'+1},\dots,\la_{n-q'+i}$ are all equal to $\la_n$, and that $\rho_{p'-i+j}-\rho_{n-q'+j}=n-p'-q'+i$ for any $j \in [1,i]$. Plugging this into \eqref{din2 su} and dividing by $i$, we see that \eqref{din2 su} (and hence \eqref{din1 su}) is equivalent to
	\eq
	\label{din3 su}
	\la_1-\la_n\leq -n+p'+q'-i+1.
	\eeq

	\begin{thm}
		\label{lem unit nonunit su} Let 
		\[
		\la=(\underbrace{\la_1,\dots,\la_1}_{p'},\la_{p'+1},\dots,\la_p\bbar\la_{p+1},\dots,\la_{n-q'},\underbrace{\la_n,\dots,\la_n}_{q'}).
		\] 
		Then:
		\begin{enumerate}
			\item
			If $\la$ satisfies \eqref{din3 su} strictly for some integer $i\in[1,\min(p',q')]$, then the inequality
			\eq 
			\label{din4 su}
			\|(\la-s)^++\rho\|^2\geq\|\la+\rho^2\|
			\eeq
			holds strictly for any Schmid module 
			\eq
			\label{din4a su}
			s=(b_1,\dots,b_j,0,\dots,0\bbar 0,\dots,0,-b_j,\dots,-b_1)
			\eeq
			with $j\leq i$.
			\item
			If 
			\eq
			\label{cont su}
			\lambda_1-\la_n< -n+p'+q'-\min(p',q')+1=-n+\max(p',q')+1,
			\eeq
			then \eqref{din4 su} holds strictly for any Schmid module $s$.
		\end{enumerate}
	\end{thm}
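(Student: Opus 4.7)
The plan is to adapt the inductive reduction strategy used in the proofs of Theorems~\ref{lem unit nonunit sp} and~\ref{lem unit nonunit so} to the $\frsu(p,q)$ case, exploiting the completely analogous structure. The two key tools remain Lemma~\ref{red sp} (for rearranging coordinates and comparing norms) and Lemma~\ref{gen prv} (for the reduction step when $s$ is too long).

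For part (1), I would induct on $b_1$. The base case $b_1=1$ forces $b_1=\dots=b_j=1$, so $s=s_j$ with $j\leq i$; the strict hypothesis \eqref{din3 su} for $i$ implies it also for $j$, hence \eqref{din4 su} is strict. For the inductive step with $b_1\geq 2$, let $k\in[1,j]$ be maximal with $b_1=\dots=b_k$, set $s'=s-s_k$, and reduce to establishing
\[
\|(\la-s)^++\rho\|^2 > \|(\la-s')^++\rho\|^2.
\]
To do this, describe $(\la-s)^+$ and $(\la-s')^+$ explicitly: both share the unchanged coordinates $\la_{p'+1},\dots,\la_p$ and $\la_{p+1},\dots,\la_{n-q'}$ together with an intermediate descending block, but $(\la-s)^+$ has $k$ copies of $\la_1-b_1$ in the top block and $k$ copies of $\la_n+b_1$ in the bottom block, whereas $(\la-s')^+$ has $k$ copies of $\la_1-b_1+1$ and $k$ copies of $\la_n+b_1-1$ in the corresponding positions. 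Using Lemma~\ref{red sp}(1) to move the unchanged coordinates out of the way, the comparison reduces, after factoring differences of squares, to
\[
\la_1-\la_n < -n+p'+q'+2b_1-k-1,
\]
which follows from the hypothesis $\la_1-\la_n<-n+p'+q'-i+1$ together with $b_1\geq 1$ and $k\leq i$.

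For part (2), once $s$ has more than $m:=\min(p',q')$ nonzero components on each side, I would set $\la'=(\la-s_m)^+$ and $s'=s-s_m$, and prove the two inequalities
\[
\|(\la-s)^++\rho\|^2\geq \|(\la'-s')^++\rho\|^2 \quad\text{and}\quad \|\la+\rho\|^2<\|\la'+\rho\|^2.
\]
The first follows from Lemma~\ref{gen prv} applied with $\mu=\la'$, $\nu=s'$, $w_2=1$, and a Weyl element $w_1\in W_\frk$ with $\la-s_m=w_1\la'$; the second is a direct computation using \eqref{cont su}, entirely parallel to the analogous estimate \eqref{ind step so 5} in the proof of Theorem~\ref{lem unit nonunit so}(2). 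Adding these produces
\[
\|(\la-s)^++\rho\|^2-\|\la+\rho\|^2 > \|(\la'-s')^++\rho\|^2-\|\la'+\rho\|^2,
\]
and the conclusion follows by induction on the first coordinate (or on the length) of $s$, together with part (1) applied to $\la'$ in the reduced situation.

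The main obstacle I anticipate is bookkeeping: tracking how $p'$, $q'$, and $\la_1-\la_n$ transform under $\la\mapsto\la'=(\la-s_m)^+$. Subtracting $s_m$ acts simultaneously on the top and bottom blocks and the asymmetry $p\leq q$ means $p'$ and $q'$ are not interchangeable, so one must verify that the inductive hypothesis of part~(2) — or the hypothesis of part~(1) in the reduced situation — continues to hold for $\la'$ with its new parameters $(p')'$, $(q')'$, $\la'_1-\la'_n$. The case distinctions arising from whether the old $p'$, $q'$ meet the upper bounds $p$, $q$ and from the new alignment of $\min$ and $\max$ after the reduction will require some care to ensure that the induction closes cleanly in every sub-case.
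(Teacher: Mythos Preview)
Your plan for part~(1) is correct and coincides with the paper's argument in that regime: reduce $s$ to $s'=s-s_k$ (with $k$ the length of the top block of $s$), use Lemma~\ref{red sp} to push the unaffected coordinates aside, and arrive at exactly the inequality you state.

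For part~(2), however, the reduction $\la\mapsto\la'=(\la-s_m)^+$ with $m=\min(p',q')$ does \emph{not} close the induction, and this is more than bookkeeping. Take $p'=3$, $q'=5$, so $m=3$, and choose $\la_1-\la_n$ just below $-n+q'+1=-n+6$, say $\la_1-\la_n=-n+5.5$; also arrange $\la_{p'+1}\le\la_1-2$ and $\la_{n-q'}\ge\la_n+2$. After subtracting $s_3$ one finds $(p')'=3$, $(q')'=q'-p'=2$, and $\la'_1-\la'_n=\la_1-\la_n-1=-n+4.5$. The hypothesis~\eqref{cont su} for $\la'$ would require $\la'_1-\la'_n<-n+\max(3,2)+1=-n+4$, which fails; indeed the Dirac inequality for $\la'$ with $s_2$ is \emph{violated}. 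Thus if you start from $s=s_5$, then $s'=(s-s_3)^+=s_2$ and the right-hand side of your reduction inequality is negative, so the chain says nothing about the sign of the left-hand side. (Note also that neither the first coordinate nor the length of $s$ need decrease under $s\mapsto(s-s_m)^+$, so the induction as you describe it need not terminate; one could induct on $|s|$ instead, but the real obstruction is the failure of the hypothesis to transfer.)

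The paper sidesteps this by never changing $\la$. It keeps the same reduction $s\mapsto s'=s-s_k$ used in part~(1), but now allows $k$ to exceed $\min(p',q')$ and treats the four cases $\{k\le p'\text{ or }k>p'\}\times\{k\le q'\text{ or }k>q'\}$ by direct computation with Lemma~\ref{red sp} applied separately on each side of the bar. The new ingredient in the cases $k>p'$ (resp.\ $k>q'$) is that each of $\la_{p'+1},\dots,\la_k$ is $\le\la_1-1$ (resp.\ each of $\la_{n-k+1},\dots,\la_{n-q'}$ is $\ge\la_n+1$), and this extra unit per coordinate is exactly what makes the estimate go through under~\eqref{cont su}. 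Lemma~\ref{gen prv} is not used at all in the $\frsu$ proof.
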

	\pf
	Both (1) and (2) will follow from the following discussion.  Let $s$ be any Schmid module:
	\[
	s=(b_1,\dots,b_p\bbar -b_q,\dots,-b_{p+1},-b_p,\dots,-b_1)
	\]
	where $b_1\geq\dots\geq b_p\geq 0$ are integers, not all of them zero, and $b_{p+1}=\dots=b_q=0$. Let $k\in[1,p]$ be the maximal integer such that $b_1=\dots=b_k$, and define
	\[
	s'=s-s_k=(\underbrace{b_1-1}_k,b_{k+1},\dots,b_p\bbar -b_q,\dots,-b_{k+1},\underbrace{-b_1+1}_k).
	\]
	For both (1) and (2), it will suffice to prove
	\eq
	\label{ind step su 2}
	\|(\la-s)^++\rho\|^2>\|(\la-s')^++\rho\|^2.
	\eeq
	The statements (1) and (2) then follow by induction on $b_1$; if $b_1=1$, then $s'=0$, and \eqref{ind step su 2} is the same as the strict \eqref{din4 su}.
	
	To prove \eqref{ind step su 2}, we examine separately the two sides of all expressions involved, the left respectively right side (of the bar).
	
	If $k\leq p'$, the left side of $(\la-s)^+$ contains coordinates
	\eq
	\label{ind step su 2a} 
	\underbrace{\la_1-b_{p'},\dots,\la_1-b_{k+1}}_{p'-k},\underbrace{\la_1-b_1,\dots,\la_1-b_1}_{k}
	\eeq
	in that order, and also $\la_{p'+1}-b_{p'+1},\dots,\la_p-b_p$, arranged in descending order and appropriately interlaced with the coordinates \eqref{ind step su 2a}. 
	
	On the other hand, the left side of $(\la-s')^+$ contains the same coordinates, except that it contains $k$ coordinates equal to $\la_1-b_1+1$ in place of $k$ coordinates equal to $\la_1-b_1$.
	
	Using a version of Lemma \ref{red sp}, we may assume that $\la_{p'+1}-b_{p'+1},\dots,\la_p-b_p$ are all the way to the right in the left sides of both $(\la-s)^+$ and $(\la-s')^+$.
	\medskip
	
	Thus the contribution of the left sides to 
	$\|(\la-s)^++\rho\|^2 - \|(\la-s')^++\rho\|^2$
	is
	\begin{multline}
		\label{ind step su 2b}
		\sum_{u=p'-k+1}^{p'} \left[(\la_1-b_1+\rho_u)^2-(\la_1-b_1+1+\rho_u)^2\right] = \\
		-\sum_{u=p'-k+1}^{p'} (2\la_1-2b_1+1+2\rho_u) = 
		-\left(2k\la_1-2kb_1+k+\sum_{u=p'-k+1}^{p'}2\rho_u\right)\geq\\
		-2k\la_1+k-\sum_{u=p'-k+1}^{p'}2\rho_u
	\end{multline}
	(for the last inequality we used $b_1\geq 1$).
	
	\medskip
	
	If $k> p'$, the left side of $(\la-s)^+$ contains coordinates
	\eq
	\label{ind step su 2c} 
	\underbrace{\la_1-b_1,\dots,\la_1-b_1}_{p'},\la_{p'+1}-b_1,\dots,\la_k-b_1
	\eeq
	in that order, and also $\la_{k+1}-b_{k+1},\dots,\la_p-b_p$, arranged in descending order and appropriately interlaced with the coordinates \eqref{ind step su 2c}. 
	
	On the other hand, the left side of $(\la-s')^+$ contains coordinates
	\eq
	\label{ind step su 2d} 
	\underbrace{\la_1-b_1+1,\dots,\la_1-b_1+1}_{p'},\la_{p'+1}-b_1+1,\dots,\la_k-b_1+1
	\eeq
	in that order, and also $\la_{k+1}-b_{k+1},\dots,\la_p-b_p$, arranged in descending order and appropriately interlaced with the coordinates \eqref{ind step su 2d}. These last coordinates are the same as in $(\la-s)^+$, and also at the same places.
	
	Using (the extension of) Lemma \ref{red sp}, we may assume that $\la_{k+1}-b_{k+1},\dots\la_p-b_p$ are all the way to the right in the left sides of both $(\la-s)^+$ and $(\la-s')^+$.
	Thus the contribution of the left sides to 
	$\|(\la-s)^++\rho\|^2 - \|(\la-s')^++\rho\|^2$
	is
	\begin{multline}
		\label{ind step su 2e}
		\sum_{u=1}^{p'} \left[(\la_1-b_1+\rho_u)^2-(\la_1-b_1+1+\rho_u)^2\right] +\\
		\sum_{u=p'+1}^k \left[(\la_u-b_1+\rho_u)^2-(\la_u-b_1+1+\rho_u)^2\right] = \\
		-\sum_{u=1}^{p'} (2\la_1-2b_1+1+2\rho_u)-\sum_{u=p'+1}^{k} (2\la_u-2b_1+1+2\rho_u) 
	\end{multline}
	
	Since $\la_u\leq\la_1-1$ for $u>p'$, and since $b_1\geq 1$, we conclude that the expression \eqref{ind step su 2e} is 
	\eq
	\label{ind step su 2f}
	\geq -2k\la_1+3k-2p'-\sum_{u=1}^{k}2\rho_u.
	\eeq
	
	\medskip
	
	We now consider the right sides. We first assume $k\leq q'$. Then the right side of $(\la-s)^+$ contains coordinates
	\eq
	\label{ind step su 2g}
	\underbrace{\la_n+b_1,\dots,\la_n+b_1}_k,\la_n+b_{k+1},\dots,\la_n+b_{q'},
	\eeq
	in that order, and also $\la_{p+1}+b_q,\dots,\la_{n-q'}+b_{q'+1}$, arranged in descending order and appropriately interlaced with coordinates \eqref{ind step su 2g}.
	On the other hand, the right side of $(\la-s')^+$ contains the same coordinates, except that it contains $k$ coordinates equal to $\la_n+b_1-1$ in place of $k$ coordinates equal to $\la_n+b_1$.
	
	Using (the extension of) Lemma \ref{red sp}, we may assume that $\la_{p+1}+b_q,\dots,\la_{n-q'}+b_{q'+1}$ are all the way to the left in the right groups of both $(\la-s)^+$ and $(\la-s')^+$. Thus the contribution of the right sides to 
	$\|(\la-s)^++\rho\|^2 - \|(\la-s')^++\rho\|^2$ is
	\begin{multline}
		\label{ind step su 4}
		\sum_{v=n-q'+1}^{n-q'+k} \left[(\la_n+b_1+\rho_v)^2-(\la_n+b_1-1+\rho_v)^2\right] = \\
		\sum_{v=n-q'+1}^{n-q'+k} (2\la_n+2b_1-1+2\rho_v)=  
		2k\la_n+2kb_1-k+\sum_{v=n-q'+1}^{n-q'+k} 2\rho_v\geq \\
		2k\la_n+k+\sum_{v=n-q'+1}^{n-q'+k} 2\rho_v
	\end{multline}
	(for the last inequality we used $b_1\geq 1$).
	
	\medskip
	
	If $k> q'$, the right side of $(\la-s)^+$ contains coordinates
	\eq
	\label{ind step su 5}
	\la_{n-k+1}+b_1,\dots,\la_{n-q'}+b_1,\underbrace{\la_n+b_1,\dots,\la_n+b_1}_{q'},
	\eeq
	in that order, and also $\la_{p+1}+b_q,\dots,\la_{n-k}+b_{k+1}$, arranged in descending order and appropriately interlaced with coordinates \eqref{ind step su 5}.
	
	On the other hand, the right side of $(\la-s')^+$ contains coordinates
	\eq
	\label{ind step su 6} 
	\la_{n-k+1}+b_1-1,\dots,\la_{n-q'}+b_1-1,\underbrace{\la_n+b_1-1,\dots,\la_n+b_1-1}_{q'},
	\eeq
	in that order, and also $\la_{p+1}+b_q,\dots,\la_{n-k}+b_{k+1}$, arranged in descending order and appropriately interlaced with the coordinates \eqref{ind step su 6}. These last coordinates are the same as in $(\la-s)^+$, and also at the same places.
	
	Using (the extension of) Lemma \ref{red sp}, we may assume that $\la_{p+1}+b_q,\dots,\la_{n-k}+b_{k+1}$ are all the way to the left in the right groups of both $(\la-s)^+$ and $(\la-s')^+$. Thus the contribution of the right sides to 
	$\|(\la-s)^++\rho\|^2 - \|(\la-s')^++\rho\|^2$ is
	\begin{multline}
		\label{ind step su 7}
		\sum_{v=n-k+1}^{n-q'} \left[(\la_v+b_1+\rho_v)^2-(\la_v+b_1-1+\rho_v)^2\right] + \\
		\sum_{v=n-q'+1}^{n} \left[(\la_n+b_1+\rho_v)^2-(\la_n+b_1-1+\rho_v)^2\right]=\\
		\sum_{v=n-k+1}^{n-q'} (2\la_v+2b_1-1+2\rho_v) + 
		\sum_{v=n-q'+1}^{n} (2\la_n+2b_1-1+2\rho_v).
	\end{multline}
	
	Since $\la_v\geq\la_n+1$ for $v\leq n-q'$, and since $b_1\geq 1$, we conclude that the expression \eqref{ind step su 7} is 
	\eq
	\label{ind step su 8}
	\geq 2k\la_n+3k-2q'+\sum_{v=n-k+1}^{n}2\rho_v.
	\eeq
	
	\medskip
	
	Let us now assume that $k\leq\min(p',q')$; this is always true under the assumptions of (1). Using \eqref{ind step su 2b} and \eqref{ind step su 4}, we see that to prove the required inequality \eqref{ind step su 2} it is enough to prove that
	\eq
	\label{ind step su 9}
	2k(\la_1-\la_n)-2k+\sum_{u=p'-k+1}^{p'}2\rho_u-\sum_{v=n-q'+1}^{n-q'+k}2\rho_v < 0.
	\eeq
	Since for any integer $t\in[1,k]$ we have
	\[
	\rho_{p'-k+t}-\rho_{n-q'+t}=n-p'-q'+k,
	\]
	we see that 
	\[
	\sum_{u=p'-k+1}^{p'}2\rho_u-\sum_{v=n-q'+1}^{n-q'+k}2\rho_v  =  2k(n-p'-q'+k).
	\]
	We substitute this into \eqref{ind step su 9} and divide by $2k$. It follows that
	\eqref{ind step su 9} is equivalent to 
	\eq
	\label{ind step su 10}
	\la_1-\la_n-1+n-p'-q'+k <0.
	\eeq
	Under the assumptions of (1), it follows that the left side of \eqref{ind step su 10} is
	\[
	<(-n+p'+q'-i+1)-1+n-p'-q'+k = -i+k\leq 0,
	\]
	so this finishes the proof of (1).
	
	\medskip
	
	Under the assumptions of (2), and our current assumption that $k\leq\min(p',q')$, it follows that the left side of \eqref{ind step su 10} is
	\[
	<(-n+\max(p',q')+1)-1+n-p'-q'+k \leq \max(p',q')-p'-q'+\min(p',q') = 0,
	\]
	so this finishes the proof of (2) in case $k\leq\min(p',q')$.
	
	\medskip
	
	Let us now assume that $p'<k\leq q'$. 
	Using \eqref{ind step su 2f} and \eqref{ind step su 4}, we see that to prove the required inequality \eqref{ind step su 2} it is enough to prove that
	\eq
	\label{ind step su 11}
	2k(\la_1-\la_n)-4k+2p'+\sum_{u=1}^{k}2\rho_u-\sum_{v=n-q'+1}^{n-q'+k}2\rho_v < 0.
	\eeq
	Since for any integer $t\in[1,k]$ we have
	\[
	\rho_{t}-\rho_{n-q'+t}=n-q',
	\]
	we see that 
	\[
	\sum_{u=1}^{k}2\rho_u-\sum_{v=n-q'+1}^{n-q'+k}2\rho_v  =  2k(n-q').
	\]
	We substitute this into \eqref{ind step su 11}. It follows that
	\eqref{ind step su 11} is equivalent to 
	\eq
	\label{ind step su 12}
	2k(\la_1-\la_n)-4k+2p'+2k(n-q') <0.
	\eeq
	
	Under the assumptions of (2), remembering that in the present case $\max(p',q')=q'$, we see that the left side of \eqref{ind step su 10} is
	\[
	<2k(-n+q'+1)-4k+2p'+2k(n-q') = -2k+2p',
	\]
	and this is $< 0$ since in the present case $k>p'$. This finishes the proof of (2) in case $p'<k\leq q'$.
	
	\medskip
	
	Let us now assume that $q'<k\leq p'$. 
	Using \eqref{ind step su 2b} and \eqref{ind step su 8}, we see that to prove the required inequality \eqref{ind step su 2} it is enough to prove that
	\eq
	\label{ind step su 13}
	2k(\la_1-\la_n)-4k+2q'+\sum_{u=p'-k+1}^{p'}2\rho_u-\sum_{v=n-k+1}^{n}2\rho_v < 0.
	\eeq
	Since for any integer $t\in[1,k]$ we have
	\[
	\rho_{p'-k+t}-\rho_{n-k+t}=n-p',
	\]
	we see that 
	\[
	\sum_{u=p'-k+1}^{p'}2\rho_u-\sum_{v=n-k+1}^{n}2\rho_v  =  2k(n-p').
	\]
	We substitute this into \eqref{ind step su 13}. It follows that
	\eqref{ind step su 13} is equivalent to 
	\eq
	\label{ind step su 14}
	2k(\la_1-\la_n)-4k+2q'+2k(n-p') <0.
	\eeq
	
	Under the assumptions of (2), remembering that in the present case $\max(p',q')=p'$, we see that the left side of \eqref{ind step su 14} is
	\[
	<2k(-n+p'+1)-4k+2q'+2k(n-p') =-2k+2q',
	\]
	and this is $< 0$ since in the present case $k>q'$. 
	This finishes the proof of (2) in case $q'<k\leq p'$.
	
	\medskip
	
	Finally, let us assume that $k>\max(p',q')$. 
	Using \eqref{ind step su 2f} and \eqref{ind step su 8}, we see that to prove the required inequality \eqref{ind step su 2} it is enough to prove that
	\eq
	\label{ind step su 15}
	2k(\la_1-\la_n)-6k+2p'+2q'+\sum_{u=1}^{k}2\rho_u-\sum_{v=n-k+1}^{n}2\rho_v < 0.
	\eeq
	
	Since for any integer $t\in[1,k]$ we have
	\[
	\rho_{t}-\rho_{n-k+t}=n-k,
	\]
	we see that 
	\[
	\sum_{u=1}^{k}2\rho_u-\sum_{v=n-k+1}^{n}2\rho_v  =  2k(n-k).
	\]
	We substitute this into \eqref{ind step su 15}. It follows that
	\eqref{ind step su 15} is equivalent to 
	\eq
	\label{ind step su 16}
	2k(\la_1-\la_n)-6k+2p'+2q'+2k(n-k) <0.
	\eeq
	Under the assumptions of (2), it follows that the left side of \eqref{ind step su 16} is
	\begin{multline*}
		<2k(-n+\max(p',q')+1)-6k+2p'+2q'+2k(n-k)=\\
		2k(\max(p',q')-k)-4k+2p'+2q',
	\end{multline*}
	and this is $<0$, since in the present case $k>\max(p',q')$. 
	This finishes the proof of (2) in case $k>\max(p',q')$, and hence the proof of Theorem \ref{lem unit nonunit su} is completed.
	\epf

	\subsection{Dirac inequality for $\frso(2, 2n-2)$}
	The basic Schmid $\frk$-submodules of $S(\frp^-)$ have lowest weights $-s_1$ or $-s_2$, where
	\[
	s_1=(1, 1, 0, \ldots 0), \
	s_2 = (2, 0, 0, \ldots 0).
	\] 
	
	Moreover, all irreducible $\frk$-submodules of $S(\frp^-)$ have lowest weights $-s_{a,b}$, where
	$s_{a,b} = (2b + a, a, 0, \ldots 0).$

	The highest weight $(\frg,K)$-modules have highest weights of the form
	$\lambda = (\lambda_1, \ldots, \lambda_n)$ is $\frk$-dominant if 
	\[
	\lambda_2 \geq \lambda_3 \geq \cdots \lambda_{n-1} \geq |\lambda_n|.
	\]
	where components $\la_1,\dots,\la_n$ satisfy
	$\lambda_i - \lambda_j \in \bbZ$ for $2\leq i, j \leq n.$

	In this case $\rho = (n-1, n-2, \ldots, 0)$.
	
	The basic necessary condition for unitarity is, as before, the Dirac inequality
	\eq
	\label{basic di so2, 2n-2}
	\|(\la-s_1)^++\rho\|^2\geq\|\la+\rho\|^2.
	\eeq

	The basic Dirac inequality for a Schmid module $s$ is
	\begin{equation}\label{eq:so_even_basic}
		\| (\lambda - s)^+ + \rho \|^2 \geq \| \lambda + \rho \|^2
	\end{equation}
	This is equivalent to
	\begin{equation}
		2 \langle \gamma \,|\, \lambda + \rho \rangle \leq \|\gamma\|^2
	\end{equation}
	where $\gamma$ is defined by $(\lambda - s_{a,b})^+ = \lambda - \gamma.$
	
	\begin{lem}
		The basic Dirac inequality for $s = s_1$ is given by
		\begin{equation}\label{eq:so_even_basic_dirac}
			\begin{aligned}
				\lambda_1 &\leq 0 & \text{ for } \lambda=(\lambda_1, 0,\ldots, 0)  \\
				\lambda_1 &\leq 3/2-n & \text{ for } \lambda=(\lambda_1, 1/2,\ldots, \pm 1/2) \\
				\lambda_1 + \lambda_2 &\leq 2 + p - 2n &\text{ for } \lambda=(\lambda_1, \lambda_2, \ldots, \lambda_p, \ldots, \lambda_n) \\
				&  & \text{ where } 1 \leq \lambda_2 = \cdots = \lambda_p > \lambda_{p+1} \text{ and } 2 \leq p \leq n.
			\end{aligned}
		\end{equation}
	\end{lem}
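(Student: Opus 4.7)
The plan is to handle each of the three cases separately, since the identification of the dominant $W_\frk$-conjugate of $\la - s_1$ depends on which regime $\la_2$ falls into. In every case, writing $(\la - s_1)^+ = \la - \gamma$ for a suitable vector $\gamma$ reduces the Dirac inequality $\|(\la - s_1)^+ + \rho\|^2 \geq \|\la + \rho\|^2$ to the scalar inequality $2\langle \la + \rho, \gamma \rangle \leq \|\gamma\|^2$, as observed in the paragraph preceding the lemma. Throughout I keep in mind that $W_\frk \cong W(D_{n-1})$ acts on the last $n-1$ coordinates by permutations together with pairs of sign changes.

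First I would compute $\la - s_1 = (\la_1 - 1, \la_2 - 1, \la_3, \dots, \la_n)$ and bring it to dominant form in each case. For $\la = (\la_1, 0, \dots, 0)$ the tail $(-1, 0, \dots, 0)$ needs an even sign flip (of the $-1$ together with some zero) combined with a permutation to become the dominant $(1, 0, \dots, 0)$; this yields $\gamma = \eps_1 - \eps_2$ with $\|\gamma\|^2 = 2$, and the inequality simplifies directly to $\la_1 \leq 0$. For $\la = (\la_1, 1/2, \dots, 1/2, \pm 1/2)$ I treat $\la_n = 1/2$ and $\la_n = -1/2$ as sub-cases. In the former, a single permutation moves the $-1/2$ in $\la - s_1$ to the last slot; in the latter, simultaneous flipping of the two $-1/2$'s (a legal even sign change) turns the tail into $(1/2, \dots, 1/2)$. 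The two sub-cases produce $\gamma = \eps_1 + \eps_n$ and $\gamma = \eps_1 - \eps_n$, respectively, both with $\|\gamma\|^2 = 2$; a direct computation of $\langle \la + \rho, \gamma \rangle$ in each sub-case gives the common condition $\la_1 \leq 3/2 - n$.

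In the third case, $\la - s_1$ has initial block $(\la_1 - 1, \la_2 - 1, \la_2, \dots, \la_2)$ with $p-2$ copies of $\la_2$. Integrality of the differences together with the strict inequality $\la_p > \la_{p+1}$ implies $\la_{p+1} \leq \la_2 - 1$, so the cyclic permutation moving $\la_2 - 1$ to position $p$ restores dominance of the tail. This yields $\gamma = \eps_1 + \eps_p$, and the inner-product identity $\langle \la + \rho, \eps_1 + \eps_p \rangle = \la_1 + \la_2 + 2n - p - 1$ rearranges to exactly $\la_1 + \la_2 \leq 2 + p - 2n$.

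The main obstacle is the correct identification of the dominant $W_\frk$-conjugate in the first case and in the $\la_n = -1/2$ sub-case, where the restriction to even sign changes in $W(D_{n-1})$ forces two simultaneous sign flips rather than a single one; a naive single permutation would land outside the dominant chamber. Once $\gamma$ is pinned down, the rest amounts to an elementary evaluation of $\langle \la + \rho, \gamma \rangle$ and $\|\gamma\|^2 = 2$ using the explicit $\rho$ from Table \ref{tab:table1}.
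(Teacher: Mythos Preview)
Your proposal is correct and follows essentially the same approach as the paper: in each case you identify $(\la-s_1)^+=\la-\gamma$ with $\gamma=\eps_1-\eps_2$, $\eps_1\pm\eps_n$, and $\eps_1+\eps_p$ respectively, then reduce the Dirac inequality to $2\langle\la+\rho,\gamma\rangle\leq\|\gamma\|^2=2$. The paper treats Case~2 uniformly via the $\pm$ notation while you split it into two sub-cases, and you are more explicit about why the claimed $W_\frk$-conjugates are actually dominant (in particular the even-sign-change subtlety for $W(D_{n-1})$), but the substance is identical.
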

	\begin{proof}
		\textbf{Case 1:} $\lambda = (\lambda_1, 0, \ldots, 0)$
		
		In this case we have
		\begin{align*}
			(\lambda - s_1)^+ &= (\lambda_1 - 1, -1, 0, \ldots, 0)^+ \\
			&= (\lambda_1 - 1, 1, 0, \ldots, 0) \\
			&= \lambda - (\epsilon_1 - \epsilon_2)
		\end{align*}
		which shows that $\gamma = \epsilon_1 - \epsilon_2$ and \eqref{eq:so_even_basic} reduces to 
		\(
		\lambda_1 + n - 1 -  (n - 2) \leq 1
		\)
		which is equivalent to $\lambda_1 \leq 0.$
		
		\medskip
		
		\textbf{Case 2:} $\lambda = (\lambda_1, 1/2, \ldots, \pm 1/2)$
		
		In this case we have 
		\begin{align*}
			(\lambda - s_1)^+ &= (\lambda_1 - 1, -1/2, 1/2, \ldots,1/2, \pm 1/2)^+ \\
			&= (\lambda_1 - 1, 1/2, 1/2, \ldots,1/2, \mp 1/2) \\
			&= \lambda - (\epsilon_1 \pm \epsilon_n)
		\end{align*}
		Plugging $\gamma = \epsilon_1 \pm \epsilon_n$ into \eqref{eq:so_even_basic} we obtain $\lambda_1 + n - 1 + 1/2 \leq 1$ which gives 
		\begin{equation}
			\lambda_1 \leq 3/2 - n.
		\end{equation}
		
		\textbf{Case  3:} $1 \leq \lambda_2 = \cdots = \lambda_p > \lambda_{p+1}$
		
		The basic Dirac inequality \eqref{eq:so_even_basic} for $s=s_1$ has $\gamma = \epsilon_1 + \epsilon_p$ and
		\begin{align*}
			2(\lambda_1 + n-1 + \lambda_p + n - p) &\leq 2 \\
			\lambda_1 + \lambda_2 &\leq 2 + p - 2n
		\end{align*}
		
	\end{proof}
	
	We will refer to the first case as to the \emph{scalar} case, the second case is the \emph{spinor} case and the remaining one is the \emph{general} case. In the scalar and spinor case we can actually prove Dirac inequalities directly.
	
	\begin{thm}[Scalar case] \label{scalar}
		Let $\lambda = (\lambda_1, 0, \ldots, 0)$ such that $\lambda_1 < 2-n$. Then the Dirac inequality \eqref{eq:so_even_basic} holds for any Schmid module $s$.
	\end{thm}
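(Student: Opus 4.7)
The plan is a direct computation, since once one knows $(\lambda-s)^+$ explicitly, the Dirac inequality reduces to checking non-negativity of an elementary expression in $a$, $b$, $n$ and $\lambda_1$.

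First I would identify $(\lambda - s)^+$ for a general Schmid module $s = s_{a,b} = (2b+a, a, 0, \dots, 0)$ with integers $a, b \geq 0$. Subtracting gives $\lambda - s_{a,b} = (\lambda_1 - 2b - a, -a, 0, \dots, 0)$. The first coordinate is fixed by $W_{\frk}$ (which acts as the $D_{n-1}$ Weyl group on coordinates $2,\dots,n$), while on the remaining coordinates $(-a,0,\dots,0)$ one can flip the sign of $-a$ together with a sign flip of a zero entry, an even number of sign changes, to reach the dominant form $(a,0,\dots,0)$. Hence
\[
(\lambda - s_{a,b})^+ = (\lambda_1 - 2b - a,\, a,\, 0,\dots,0).
\]

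Next, using $\rho = (n-1, n-2, \dots, 1, 0)$ and setting $L := \lambda_1 + n - 1$, I would compute the difference of squared norms, factoring differences of squares:
\[
\|(\lambda - s_{a,b})^+ + \rho\|^2 - \|\lambda + \rho\|^2 = (L - 2b - a)^2 - L^2 + (a + n-2)^2 - (n-2)^2,
\]
which simplifies to
\[
(2b+a)\bigl(2b + a - 2L\bigr) + a\bigl(a + 2(n-2)\bigr).
\]
The hypothesis $\lambda_1 < 2-n$ translates to $L < 1$.

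Finally, I would verify non-negativity of this expression by a short case split. If $a = b = 0$, both summands vanish. If $a = 0$ and $b \geq 1$, the expression equals $4b(b-L) > 0$ since $L < 1 \leq b$. If $a \geq 1$, the second summand is $\geq 2n-3 > 0$, and the first summand is non-negative whenever $2b + a \geq 2$ (since then $2b+a-2L > 0$), while for $(a,b) = (1,0)$ the total is $2n - 2 - 2L > 2n - 4 \geq 0$. This exhausts all Schmid modules.

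There is no substantial obstacle here; the only subtle point is the $D_{n-1}$-orbit identification for $(\lambda - s_{a,b})^+$, where one must use that a sign change on a zero coordinate is "free", so the parity constraint on sign changes does not obstruct flipping the single non-zero entry $-a$ to $a$. Once that is noted, the rest is bookkeeping with $L < 1$.
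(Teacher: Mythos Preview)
Your proposal is correct and follows essentially the same route as the paper: both compute $(\lambda-s_{a,b})^+ = (\lambda_1-2b-a,\,a,\,0,\dots,0)$ explicitly and reduce the Dirac inequality to an elementary expression in $a,b,n,\lambda_1$. The only cosmetic difference is that the paper, after substituting $\lambda_1\leq 2-n$, rewrites the resulting inequality as $0\leq 2b(b-1)+2ab+a(a+n-3)$, which is visibly a sum of non-negative terms (for $n\geq 4$), whereas you verify positivity by a short case split on $(a,b)$; both arguments are equally valid and equally direct.
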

	\begin{proof}
		The Dirac inequality for the second basic Schmid $s_2 = 2\epsilon_1$ yields $\lambda_1 \leq 2 -n.$  
		
		Now we have
		\begin{align*}
			(\lambda - s_{a,b})^+ &= (\lambda_1 - 2b - a, -a, 0, \ldots, 0)^+ \\
			&= (\lambda_1 - 2b - a, a, 0, \ldots, 0) \\
			&= \lambda - [(2b+a)\epsilon_1 - a\epsilon_2]
		\end{align*}
		which we will plug into the Dirac inequality \eqref{eq:so_even_basic}
		\begin{align*}
			2[ (2b+a)(\lambda_1 + n -1) - a(n-2) ] &\leq (2b+a)^2 + a^2 \\
			(2b+a)(\lambda_1 + n -1) - a(n-2)  &\leq 2b^2 + 2ab + a^2 
		\end{align*}
		Using $\lambda_1 \leq 2 -n$  we see that it is sufficient to prove that 
		\begin{align*}
			(2b+a)(2 - n + n -1) - a(n-2)  &\leq 2b^2 + 2ab + a^2 \\
			2b + (3-n)a &\leq 2b^2 + 2ab + a^2 \\
			0 &\leq 2b(b-1) + 2ab + a(a + n -3).
		\end{align*}
		Since $a, b \geq 0$ and $n \geq 4$ we are done.
	\end{proof}
	
	\begin{thm}[Spinor case]
		Let $\lambda = (\lambda_1, 1/2,\ldots, \pm 1/2)$ such that $\lambda_1 \leq 3/2 - n.$ Then the Dirac inequality \eqref{eq:so_even_basic} holds for any Schmid module $s$.
	\end{thm}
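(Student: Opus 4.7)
The plan is to mirror Theorem \ref{scalar}: parameterize every Schmid module as $s_{a,b}=(2b+a,a,0,\dots,0)$ with integers $a,b\ge 0$, compute the $W_\frk$-dominant conjugate $(\la-s_{a,b})^+$ explicitly, and then verify \eqref{eq:so_even_basic} by direct algebra using only $\la_1\le 3/2-n$. The new wrinkle compared with the scalar case is that $\la=(\la_1,1/2,\dots,\pm 1/2)$ has a half-integer tail, so identifying $(\la-s_{a,b})^+$ requires the full $D_n$-type action of $W_\frk$, which permits only even numbers of sign changes in the last $n-1$ coordinates. Since $\rho_n=0$, the last coordinate contributes $1/4$ to every squared norm we form, and the value of $\|(\la-s_{a,b})^++\rho\|^2$ does not depend on the sign of $\la_n$; hence I would assume $\la_n=+1/2$ without loss.

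With $\la_n=1/2$ fixed, the trailing entries of $\la-s_{a,b}$ are $1/2-a,\,1/2,\dots,1/2$, and I would split into three regimes. For $a=0$ no rearrangement is needed: $(\la-s_{0,b})^+=\la-2b\eps_1$. For $a=1$, simply sorting gives $(\la_1-2b-1,\,1/2,\dots,1/2,\,-1/2)$, which is already dominant because $|\mu_n|=\mu_{n-1}=1/2$. For $a\ge 2$ one applies the even sign flip allowed by $W(D_n)$, flipping $1/2-a$ together with one of the copies of $1/2$, and obtains $(\la-s_{a,b})^+=(\la_1-2b-a,\,a-1/2,\,1/2,\dots,1/2,\,-1/2)$.

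Armed with these formulas, I would expand $\|(\la-s_{a,b})^++\rho\|^2-\|\la+\rho\|^2$ using the identity $(x+\alpha)^2-(y+\alpha)^2=(x-y)(x+y+2\alpha)$. Setting $L=\la_1+n-1$, so the hypothesis reads $L\le 1/2$, the three cases collapse respectively to $4b(b-L)$, $(2b+1)(2b+1-2L)$, and $(2b+a)(2b+a-2L)+(a-1)(a+2n-4)$, each visibly non-negative for $a,b\ge 0$, $L\le 1/2$, and $n\ge 4$. The main obstacle is keeping $(\la-s_{a,b})^+$ correct when $a\ge 2$: one must remember that $W_\frk$ is of type $D$ and allows only paired sign changes, which is precisely what forces the $-1/2$ into the last slot of the dominant conjugate. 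Once that bookkeeping is settled, the remaining algebraic verification is elementary and parallels the scalar case.
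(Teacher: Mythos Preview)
Your proposal is correct and follows essentially the same direct-computation approach as the paper: identify $(\lambda-s_{a,b})^+$ explicitly and reduce \eqref{eq:so_even_basic} to an elementary inequality using $\lambda_1\le 3/2-n$. Your extra observation that $\rho_n=0$ lets you assume $\lambda_n=+1/2$ is sound, and your final expressions $4b(b-L)$, $(2b+1)(2b+1-2L)$, and $(2b+a)(2b+a-2L)+(a-1)(a+2n-4)$ are equivalent to the paper's single formula for $a\ge 1$; your term-by-term nonnegativity check is in fact a bit cleaner than the paper's reduction to a quadratic in~$a$.
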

	\begin{proof}
		We have
		\begin{align*}
			(\lambda - s_{a,b})^+ &= (\lambda_1 - 2b - a, 1/2 -a, 1/2, \ldots, \pm 1/2)^+ \\
			&= (\lambda_1 - 2b - a, a-1/2, 1/2, \ldots, \mp 1/2) \\
			&= \lambda - [(2b+a)\epsilon_1 -(a-1)\epsilon_2 \pm \epsilon_n]
		\end{align*}
		for $a \geq 1.$

		Dirac inequality for the case $a=0$ reads
		\begin{align*}
			2 [2b(\lambda_1 + n -1)] &\leq 4b^2 \\
			\lambda_1 + n -1 &\leq b \\
			\lambda_1 &\leq (b+1) - n
		\end{align*}
		which is satisfied (strictly) whenever $b\geq 1$ due to \eqref{eq:so_even_basic_dirac}. In the general case we get 
		\[
		2[ (2b+a)(\lambda_1 + n -1) -(a-1)(1/2 + n-2) + 1/2] \leq (2b+a)^2 + (a-1)^2 + 1 
		\]
		and using \eqref{eq:so_even_basic_dirac} it is sufficient to show
		\begin{align*}
			2[ (2b+a)(3/2-n + n -1) -(a-1)(1/2 + n-2) + 1/2] &\leq (2b+a)^2 + (a-1)^2 + 1.
		\end{align*}
		This is in turn equivalent to showing nonnegativity of 
		\begin{align*}
			a^2 + 2 a b +  a n - 3 a + 2b^2 - b - n + 2 &\geq 0 \\
			(a + b)^2 + b(b-1) + a(n-3) + 2 - n &\geq 0
		\end{align*}
		This is clearly decreasing in $b$ and so we just need to prove that 
		\[
		a^2 + (n-3)a - n + 2 \geq 0.
		\]
		The roots of this quadratic polynomial are $1$ and $2-n$ which finishes this case. 
	\end{proof}
	
	\begin{lem}
		Let $s_{a,b} = (2b + a, a, 0,\ldots, 0)$ be a Schmid module with $a \geq 1$ and $b \geq 0.$ Let $\lambda$ satisfy \eqref{eq:so_even_basic_dirac} in the case $\lambda_2 \neq 0$ and $\lambda_1 \leq 2-n$ in the scalar case. Then we have
		\begin{equation}\label{eq:so_even_induction_b}
			\| (\lambda - s_{a,b+1})^+ + \rho \|^2 - \| \lambda + \rho \|^2 \geq \| (\lambda - s_{a, b})^+ + \rho\|^2 - \| \lambda + \rho \|^2.
		\end{equation}
	\end{lem}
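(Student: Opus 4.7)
The plan is to exploit the structural feature that for $\frso(2,2n-2)$ the Weyl group $W_\frk$ (generated, according to Table \ref{tab:table1}, by $s_{\eps_i\pm\eps_j}$ with $2\leq i<j\leq n$) acts only on the last $n-1$ coordinates, leaving the first coordinate untouched. Since $s_{a,b+1}-s_{a,b}=2\eps_1$, the tuples
\[
\la-s_{a,b}=(\la_1-2b-a,\ \la_2-a,\la_3,\dots,\la_n),\qquad
\la-s_{a,b+1}=(\la_1-2b-a-2,\ \la_2-a,\la_3,\dots,\la_n)
\]
differ only in the first slot, and the same $w\in W_\frk$ brings both of their last $n-1$ coordinates into $\frk$-dominant form. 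Hence $(\la-s_{a,b+1})^+$ and $(\la-s_{a,b})^+$ agree in every coordinate except the first, where they equal $\la_1-2b-a-2$ and $\la_1-2b-a$ respectively.

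A direct computation of the single surviving term then gives, setting $c=\la_1-(2b+a)+(n-1)$,
\[
\|(\la-s_{a,b+1})^++\rho\|^2-\|(\la-s_{a,b})^++\rho\|^2=(c-2)^2-c^2=4(1-c),
\]
which is $\geq 0$ iff $c\leq 1$, i.e.\ iff $\la_1\leq 2-n+2b+a$. Since $a\geq 1$ and $b\geq 0$, it suffices to establish the stronger uniform bound $\la_1\leq 3-n$.

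The uniform bound is verified separately in the three regimes of the hypothesis. In the scalar case the assumption reads $\la_1\leq 2-n\leq 3-n$. In the spinor case \eqref{eq:so_even_basic_dirac} gives $\la_1\leq 3/2-n<3-n$. In the general case $\la_2\geq 1$ and \eqref{eq:so_even_basic_dirac} yields $\la_1+\la_2\leq 2+p-2n$ with $2\leq p\leq n$, whence
\[
\la_1\leq 2+p-2n-\la_2\leq 1+p-2n\leq 1-n<3-n.
\]

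The only conceptually nontrivial step is the initial observation that $W_\frk$ is trivial on the first coordinate, which collapses the comparison of two full dominant conjugates to a one-coordinate computation; everything after that is elementary algebra. I therefore do not expect a genuine obstacle, just the tidy three-case check that dispatches the scalar, spinor and general regimes by a single scalar inequality $\la_1\leq 3-n$.
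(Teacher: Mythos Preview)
Your proof is correct and follows essentially the same approach as the paper's: both observe that $(\la-s_{a,b+1})^+$ and $(\la-s_{a,b})^+$ differ only in the first coordinate, reduce the inequality to $\la_1\leq a+2b+2-n$, and then verify this case by case. Your explicit remark that $W_\frk$ acts trivially on the first coordinate is a clean justification for the first step (which the paper leaves implicit), and your use of the uniform bound $\la_1\leq 3-n$ in place of the paper's case-by-case verification of $\la_1\leq a+2b+2-n$ is a harmless cosmetic variation.
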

	\begin{proof}
		Since the weights differ only in the first coordinate, the difference of the left hand side and the right hand side is just the difference of squares on the first coordinate:
		\begin{align*}
			(\lambda_1 - 2b - 2 - a + n - 1)^2 - (\lambda_1 - 2b - a + n - 1)^2  &\geq 0 \\
			-2 [ 2(\lambda_1 - 2b - a + n - 1) - 2] &\geq 0 \\
			\lambda_1 - 2b - a + n - 1  - 1  &\leq 0 \\
			\lambda_1 &\leq a + 2b + 2 - n
		\end{align*}
		For scalar $\lambda$ we immediately get $\lambda_1 \leq 2-n \leq a + 2b + 2 - n.$ In the spinorial case we get $\lambda_1 \leq 3/2 - n \leq a + 2b + 2 - n.$ In the remaining case we actually have $\lambda_2 \geq 1$ and so \eqref{eq:so_even_basic_dirac} implies $\lambda_1 \leq 1 + p -2n$ which is indeed less than or equal to $a + 2b - n.$
	\end{proof}

	\begin{thm}[General case]
		Let $\lambda$ be as in case 3 and let \eqref{eq:so_even_basic_dirac} holds. Then the Dirac inequality \eqref{eq:so_even_basic} holds for any Schmid module $s$. 
	\end{thm}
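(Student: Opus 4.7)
By the preceding lemma, the quantity $\|(\lambda - s_{a,b})^+ + \rho\|^2 - \|\lambda + \rho\|^2$ is weakly increasing in $b$ (for $a \geq 1$, $b \geq 0$), so it suffices to prove the Dirac inequality in two base cases: (i) $s = b s_2 = (2b, 0, \ldots, 0)$ for $b \geq 1$, and (ii) $s = a s_1 = (a, a, 0, \ldots, 0)$ for $a \geq 1$. The general $s_{a,b}$ with $a \geq 1$ then follows by monotonicity in $b$.

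Case (i) is a direct calculation. Since $\lambda_1$ is unconstrained by $\mathfrak{k}$-dominance, $(\lambda - b s_2)^+ = \lambda - b s_2$, and the Dirac inequality collapses to the single-variable quadratic $4b^2 - 4b(\lambda_1 + n - 1) \geq 0$. The hypothesis \eqref{eq:so_even_basic_dirac} combined with $\lambda_2 \geq 1$ gives $\lambda_1 \leq 1 + p - 2n \leq 1 - n$, so $\lambda_1 + n - 1 \leq 0$, and the inequality holds for every $b \geq 0$.

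Case (ii) proceeds by induction on $a$, with base case $a = 1$ being precisely \eqref{eq:so_even_basic_dirac}. For the inductive step $a \geq 2$, the plan is to prove the stronger monotonicity
\[
\|(\lambda - a s_1)^+ + \rho\|^2 \geq \|(\lambda - (a-1) s_1)^+ + \rho\|^2.
\]
One writes $\lambda - a s_1 = (\lambda_1 - a, \lambda_2 - a, \lambda_2, \ldots, \lambda_2, \lambda_{p+1}, \ldots, \lambda_n)$ explicitly, and then applies Lemma \ref{red sp}(1) to normalize both $(\lambda - a s_1)^+$ and $(\lambda - (a-1) s_1)^+$ so that they agree outside positions $1$ and the sorted position $q(a)$ of the demoted coordinate $\lambda_2 - a$. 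After this alignment the difference of norms reduces to a linear expression in $a$, and its nonnegativity is a direct consequence of the hypothesis $\lambda_1 + \lambda_2 \leq 2 + p - 2n$.

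The main obstacle is the bookkeeping around the sorted position of $\lambda_2 - a$: as $a$ grows, this position traverses $\lambda_{p+1}, \lambda_{p+2}, \ldots$, and once $|\lambda_2 - a|$ drops below $|\lambda_n|$ the sign-pair structure of $W_\mathfrak{k}$ (generated by $s_{\epsilon_i \pm \epsilon_j}$ for $2 \leq i < j \leq n$, and therefore effecting only even numbers of sign changes) must be invoked to restore $\mathfrak{k}$-dominance. These transitions occur precisely at integer values of $a$ at which $\lambda_2 - a$ coincides with some $\pm \lambda_k$, and each one can be dispatched by a single application of Lemma \ref{red sp}(1), reducing it to the generic case $q(a) = q(a-1)$ treated above.
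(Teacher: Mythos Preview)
Your route diverges from the paper's. The paper reduces to $s_{a,0}$ via the same monotonicity lemma you cite, but then inducts on $a$ by passing to a \emph{new} weight $\lambda'=(\lambda-s_1)^+$ and $s'=s_{a-1,0}$, invoking Lemma~\ref{gen prv} for the inequality $\|(\lambda-s_{a,0})^++\rho\|^2\geq\|(\lambda'-s')^++\rho\|^2$ and the basic Dirac inequality for $\|\lambda'+\rho\|^2\geq\|\lambda+\rho\|^2$, then checking that $\lambda'$ (which may drop into the scalar or spinor case) still satisfies its basic inequality. Your proposal instead keeps $\lambda$ fixed and tries to prove $\|(\lambda-as_1)^++\rho\|^2\geq\|(\lambda-(a-1)s_1)^++\rho\|^2$ directly with Lemma~\ref{red sp}. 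Your case~(i) is correct and in fact covers the pure-$s_2$ Schmid modules that the paper glosses over.

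The gap is in case~(ii). Lemma~\ref{red sp} is a statement about rearranging coordinates of strictly decreasing $n$-tuples; it does not touch sign changes. For $\frso(2,2n-2)$ the group $W_\frk$ acts on coordinates $2,\ldots,n$ by permutations together with an \emph{even} number of sign flips, and dominance means $\lambda_2\geq\cdots\geq\lambda_{n-1}\geq|\lambda_n|$. Once $a>\lambda_2$, forming $(\lambda-as_1)^+$ requires flipping $\lambda_2-a\mapsto a-\lambda_2$ \emph{and simultaneously} flipping the sign of $\lambda_n$. So $(\lambda-as_1)^+$ and $(\lambda-(a-1)s_1)^+$ may differ in position $1$, in the sorted position of the demoted coordinate, \emph{and} in position $n$; for instance with $n=4$, $\lambda=(\lambda_1,2,1,1)$, one gets $(\lambda-2s_1)^+=(\lambda_1-2,1,1,0)$ but $(\lambda-3s_1)^+=(\lambda_1-3,1,1,-1)$. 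Your assertion that ``each transition can be dispatched by a single application of Lemma~\ref{red sp}(1)'' is not correct: that lemma moves a coordinate within a decreasing sequence but cannot produce the sign flip on the last coordinate. The paper's use of Lemma~\ref{gen prv} is precisely what absorbs arbitrary $W_\frk$-elements, sign changes included, without any bookkeeping. If you want to avoid Lemma~\ref{gen prv}, you must carry out the norm comparison across the sign-flip regime by hand; the sketch as written does not do this, and the ``generic case $q(a)=q(a-1)$'' computation it refers to is also not actually performed.
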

	\begin{proof}
		
		Thanks to the previous lemma we only have to prove that for $a \geq 1$ we have
		\[
		\| (\lambda - s_{a,0})^+ + \rho \|^2 - \| \lambda + \rho \|^2 \geq 0,
		\]
		with strict inequality if \eqref{eq:so_even_basic_dirac} is strict. It follows by induction from combining the following two inequalities
		\begin{gather*}
			\| (\lambda  - s_{a,0})^+ + \rho \|^2  \geq \| (\lambda' - s')^+ + \rho\|^2 \\
			\| \lambda' + \rho \|^2 \geq   \| \lambda + \rho \|^2
		\end{gather*}
		where
		\[
		\lambda' = (\lambda - s_1)^+  \quad s' = s_{a,0} - s_1 = s_{a-1, 0}.
		\]
		The first one immediately follows from the Lemma \ref{gen prv} for $\mu = (\lambda - s_1)^+$ and $\nu = (s_{a,0} - s_1)^+$ and the second one is the Dirac inequality for $s_1$ which is nothing but \eqref{eq:so_even_basic_dirac}. 
		
		What remains is to check that $\lambda'$ satisfies the Dirac inequality for $s' = s_{a-1,0}.$ It can happen that $\lambda'$ falls into the spinor or scalar case. For $\lambda = (\lambda_1, 1, 0, \ldots, 0)$ the inequality \eqref{eq:so_even_basic_dirac} takes the form $\lambda_1 + 1 \leq 2 + 2 -2n$ which means that $\lambda_1' = \lambda_1 - 1$ satisfies $\lambda_1' \leq 2 - 2n.$ Looking back at the Theorem \ref{scalar} we see that $\lambda'$ satisfies the Dirac inequality with any Schmid module. Analogously, for $\lambda = (\lambda_1, 3/2, 1/2, \ldots, \pm 1/2)$ we have $\lambda_1 + \lambda_2 = \lambda_1 + 3/2 \leq 2 + 2 - 2n$ which gives $\lambda_1' = \lambda_1 - 1 \leq  3/2 - 2n$ which is below the unitarizability bound $3/2 - n$ for the spinor case.

		In all other cases $\lambda'$ is of general type. If $p > 2$, then $ \lambda_1' + \lambda_2' = \lambda_1 - 1 + \lambda_2$ and using \eqref{eq:so_even_basic_dirac} and $p' = p - 1$ we see that this is less than or equal to $2 + p' - 2n.$ Hence the Dirac inequality is satisfied by the induction hypothesis. For $p = 2$ we have similarly $\lambda_1' + \lambda_2' = \lambda_1 + \lambda_2 - 2 \leq 2 + 2 - 2n  \leq 2 + p' - 2n$ since $p' \geq 2.$ 
	\end{proof}

	\subsection{Dirac inequality for $\frso(2, 2n-1)$}
	The basic Schmid $\frk$-submodules of $S(\frp^-)$ have lowest weights $-s_1$ or $-s_2$, where
	\[
	s_1=(1, 1, 0, \ldots 0), \
	s_2 = (2, 0, 0, \ldots 0).
	\] 
	
	Moreover, all irreducible $\frk$-submodules of $S(\frp^-)$ have lowest weights $-s_{a,b}$, where
	$s_{a,b} = (2b + a, a, 0, \ldots 0).$

	The highest weight $(\frg,K)$-modules have highest weights of the form
	$\lambda = (\lambda_1, \ldots, \lambda_n)$, where 
	\[
	\lambda_2 \geq \lambda_3 \geq \cdots \lambda_{n} \geq 0,
	\]
	
	$\lambda_i - \lambda_j \in \bbZ$ and $2\lambda_i \in \mathbb{N}_0$ for all $2\leq i, j \leq n.$

	In this case $\rho = (n-1/2, n-3/2, \ldots, 1/2)$.
	
	The basic necessary condition for unitarity is, as before, the Dirac inequality
	\eq
	\label{basic di so2, 2n-1}
	\|(\la-s_1)^++\rho\|^2\geq\|\la+\rho\|^2.
	\eeq

	The basic Dirac inequality for a Schmid module is
	\begin{equation}\label{eq:so_odd_basic}
		\| (\lambda - s)^+ + \rho \|^2 \geq \| \lambda + \rho \|^2
	\end{equation}
	This is equivalent to
	\begin{equation}
		2 \langle \gamma \,|\, \lambda + \rho \rangle \leq \|\gamma\|^2
	\end{equation}
	where $\gamma$ is defined by $(\lambda - s)^+ = \lambda - \gamma.$

	\begin{lem}
		The basic Dirac inequality for $s = s_1$ is given by
		\begin{equation}\label{eq:so_odd_basic_dirac}
			\begin{aligned}
				\lambda_1 &\leq 0 & \text{ for } \lambda=(\lambda_1, 0,\ldots, 0)  \\
				\lambda_1 &\leq 1-n & \text{ for } \lambda=(\lambda_1, 1/2,\ldots, 1/2) \\
				\lambda_1 + \lambda_2 &\leq 1 + p - 2n &\text{ for } \lambda=(\lambda_1, \lambda_2, \ldots, \lambda_p, \ldots, \lambda_n) \\
				&  & \text{ where } 1 \leq \lambda_2 = \cdots = \lambda_p > \lambda_{p+1} \text{ and } 2 \leq p \leq n.
			\end{aligned}
		\end{equation}
	\end{lem}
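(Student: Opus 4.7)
The plan is to mimic the proof of the analogous lemma for $\frso(2, 2n-2)$, namely to compute the dominant Weyl conjugate $(\lambda - s_1)^+$ explicitly in each of the three cases, read off the root $\gamma$ defined by $(\lambda - s_1)^+ = \lambda - \gamma$, and then unfold the inner product form $2\langle\gamma, \lambda+\rho\rangle \leq \|\gamma\|^2$ of \eqref{eq:so_odd_basic}. The only substantive difference from the even-dimensional case is that the Weyl group $W_\frk$ now contains sign changes $s_{\eps_i}$ for $2 \leq i \leq n$, and the condition $\lambda_n \geq 0$ replaces $\lambda_{n-1} \geq |\lambda_n|$. Also $\rho = (n-1/2, n-3/2, \dots, 1/2)$ instead of $(n-1, n-2, \dots, 0)$, which shifts the resulting bounds by $1/2$ relative to the even case.

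In the scalar case $\lambda = (\lambda_1, 0, \dots, 0)$, one has $\lambda - s_1 = (\lambda_1 - 1, -1, 0, \dots, 0)$; applying the sign change $s_{\eps_2}$ and then reordering gives $(\lambda - s_1)^+ = \lambda - (\eps_1 - \eps_2)$, so $\gamma = \eps_1 - \eps_2$, $\|\gamma\|^2 = 2$, and the inequality becomes $2(\lambda_1 + (n-1/2) - (n-3/2)) \leq 2$, i.e., $\lambda_1 \leq 0$. In the spinor case $\lambda = (\lambda_1, 1/2, \dots, 1/2)$, one has $\lambda - s_1 = (\lambda_1 - 1, -1/2, 1/2, \dots, 1/2)$; the sign change $s_{\eps_2}$ gives $(\lambda - s_1)^+ = \lambda - \eps_1$, so $\gamma = \eps_1$, $\|\gamma\|^2 = 1$, and one obtains $2(\lambda_1 + n - 1/2) \leq 1$, i.e., $\lambda_1 \leq 1-n$. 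Note that unlike the even-dimensional case there is no $\pm$ ambiguity at the last coordinate, because $\lambda_n \geq 0$ forces $\lambda_n = +1/2$.

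In the general case $1 \leq \lambda_2 = \dots = \lambda_p > \lambda_{p+1}$ with $2 \leq p \leq n$, one has $\lambda - s_1 = (\lambda_1 - 1, \lambda_2 - 1, \lambda_3, \dots, \lambda_n)$. Since $\lambda_3 = \dots = \lambda_p = \lambda_2$ and $\lambda_p - 1 \geq \lambda_{p+1}$ (by integrality of differences and the strict inequality $\lambda_p > \lambda_{p+1}$), sorting the second coordinate $\lambda_2 - 1$ into position $p$ gives $(\lambda - s_1)^+ = \lambda - (\eps_1 + \eps_p)$. Hence $\gamma = \eps_1 + \eps_p$, $\|\gamma\|^2 = 2$, and the Dirac inequality becomes $2(\lambda_1 + (n - 1/2) + \lambda_p + (n - p + 1/2)) \leq 2$, which simplifies to $\lambda_1 + \lambda_2 \leq 1 + p - 2n$.

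There is no real obstacle; the whole proof is a routine case-by-case bookkeeping exercise once the correct Weyl-group element bringing $\lambda - s_1$ to the dominant chamber is identified in each case. The only subtlety worth flagging is verifying that $\lambda_p - 1 \geq \lambda_{p+1}$ in the general case so that the sorted position of the altered second coordinate is indeed $p$ and not further to the right, but this is immediate from the integrality constraint on differences of coordinates $\lambda_i$ with $i \geq 2$.
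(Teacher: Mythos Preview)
Your proof is correct and follows essentially the same approach as the paper: in each of the three cases you compute $(\lambda-s_1)^+$, identify $\gamma$ with $(\lambda-s_1)^+=\lambda-\gamma$, and reduce the Dirac inequality to $2\langle\gamma,\lambda+\rho\rangle\leq\|\gamma\|^2$, arriving at the same bounds. Your remark that $\lambda_p-1\geq\lambda_{p+1}$ in the general case (by integrality of $\lambda_p-\lambda_{p+1}$) is a small clarification the paper leaves implicit.
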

	\begin{proof}
		\textbf{Case 1:} $\lambda = (\lambda_1, 0, \ldots, 0)$
		
		In this case we have
		\begin{align*}
			(\lambda - s_1)^+ &= (\lambda_1 - 1, -1, 0, \ldots, 0)^+ \\
			&= (\lambda_1 - 1, 1, 0, \ldots, 0) \\
			&= \lambda - (\epsilon_1 - \epsilon_2)
		\end{align*}
		which shows that $\gamma = \epsilon_1 - \epsilon_2$ and \eqref{eq:so_odd_basic} reduces to 
		\(
		\lambda_1 + n - 1/2 -  (n - 3/2) \leq 1
		\)
		which is equivalent to $\lambda_1 \leq 0.$
		
		\medskip
		
		\textbf{Case 2:} $\lambda = (\lambda_1, 1/2, \ldots, 1/2)$
		
		In this case we have 
		\begin{align*}
			(\lambda - s_1)^+ &= (\lambda_1 - 1, -1/2, 1/2, \ldots,1/2, 1/2)^+ \\
			&= (\lambda_1 - 1, 1/2, 1/2, \ldots,1/2, 1/2) \\
			&= \lambda - \epsilon_1 
		\end{align*}
		Plugging $\gamma = \epsilon_1$ into \eqref{eq:so_odd_basic} we obtain $2(\lambda_1 + n - 1/2) \leq 1$ which gives 
		\begin{equation}
			\lambda_1 \leq 1 - n.
		\end{equation}
		
		\textbf{Case  3:} $1 \leq \lambda_2 = \cdots = \lambda_p > \lambda_{p+1}$
		
		The basic Dirac inequality \eqref{eq:so_odd_basic} for $s=s_1$ has $\gamma = \epsilon_1 + \epsilon_p$ and
		\begin{align*}
			2(\lambda_1 + n-1/2 + \lambda_p + n - p + 1/2) &\leq 2 \\
			\lambda_1 + \lambda_2 &\leq 1 + p - 2n
		\end{align*}
		
	\end{proof}
	
	\begin{thm}[Scalar case]
		Let $\lambda = (\lambda_1, 0, \ldots, 0)$ such that $\lambda_1 < 3/2-n$. Then \eqref{eq:so_odd_basic} holds for any Schmid module $s$.
	\end{thm}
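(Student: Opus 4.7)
The plan is to mirror the structure of the scalar case theorem for $\frso(2,2n-2)$ (Theorem \ref{scalar}). The key observation, as in the even case, is that the bound $\lambda_1<3/2-n$ in the hypothesis is exactly (the strict form of) the Dirac inequality for the second basic Schmid module $s_2=2\eps_1$. Indeed, for $\la=(\la_1,0,\dots,0)$ we have $(\la-s_2)^+=(\la_1-2,0,\dots,0)$, and a direct computation of $\|(\la-s_2)^++\rho\|^2-\|\la+\rho\|^2$ collapses to a single-coordinate difference of squares whose nonnegativity is equivalent to $\la_1\le 3/2-n$. So the hypothesis immediately takes care of the basic $s_1$ and $s_2$ cases.

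For a general Schmid module $s_{a,b}=(2b+a,a,0,\dots,0)$ with $a,b\ge 0$ not both zero, I would first compute the $\frk$-dominant conjugate. Since the Weyl group $W_\frk$ for $\frk=\frso(2)\oplus\frso(2n-1)$ leaves the first coordinate fixed but can flip the sign of the second, we get
\[
(\la-s_{a,b})^+=(\la_1-2b-a,a,0,\dots,0)=\la-[(2b+a)\eps_1-a\eps_2].
\]
So with $\gamma=(2b+a)\eps_1-a\eps_2$, the Dirac inequality \eqref{eq:so_odd_basic} is equivalent to
\[
2\bigl[(2b+a)(\la_1+n-\tfrac12)-a(n-\tfrac32)\bigr]\le (2b+a)^2+a^2.
\]

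The next step is to substitute $\la_1\le 3/2-n$ (a consequence of the hypothesis) into the left-hand side. This reduces the inequality to the purely numerical statement
\[
4b^2+4ab+2a^2-4b-5a+2an\ \ge\ 0,
\]
which I would rewrite as $4b(b-1)+4ab+2a^2+a(2n-5)\ge 0$. For $a\ge 1$ and $n\ge 3$ the last two summands already dominate and everything is manifestly nonnegative; the remaining corner cases ($a=0$, or small $n$ with $2n-5<0$) are handled by a short case split, using that $s\neq 0$ forces either $b\ge 1$ when $a=0$, or $a\ge 1$ otherwise. This is the only step that looks like a potential obstacle, but it is just bookkeeping, not a genuine difficulty. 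Finally, strictness of the original hypothesis $\la_1<3/2-n$ (rather than just $\le$) upgrades the bound to a strict one, as required.

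Overall the argument is entirely parallel to the even scalar case; the only real adjustments are the shifted values of $\rho$ (half-integer rather than integer entries) and the slightly different shape of the $s_2$-bound, both of which are cosmetic. No use of Lemma \ref{gen prv} or inductive reduction on $b_1$ is needed here, because in the scalar case $(\la-s_{a,b})^+$ can be written out in closed form.
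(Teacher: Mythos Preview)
Your proposal is correct and follows essentially the same route as the paper: compute $(\la-s_{a,b})^+=(\la_1-2b-a,a,0,\dots,0)$, reduce \eqref{eq:so_odd_basic} to $2[(2b+a)(\la_1+n-\tfrac12)-a(n-\tfrac32)]\le (2b+a)^2+a^2$, substitute $\la_1\le 3/2-n$, and check the resulting numerical inequality; the paper's final expression $2b(b-1)+2ab+a(a+n-5/2)\ge 0$ is exactly half of your $4b(b-1)+4ab+2a^2+a(2n-5)\ge 0$. Your worry about ``small $n$ with $2n-5<0$'' is unnecessary, since in this series one takes $n\ge 3$ (as the paper does), so $2n-5\ge 1$ and no extra case split is needed.
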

	\begin{proof}
		The Dirac inequality for the second basic Schmid  yields $\lambda_1 \leq 3/2 -n.$  
		
		General Schmid module has highest weight $s_{a,b} = (2b + a, a, 0, \ldots 0)$ and similarly as before we get that the Dirac inequality is equivalent to
		\[
		(2b+a)(\lambda_1 + n + 1/2 -1) - a(n+1/2-2)  \leq 2b^2 + 2ab + a^2.
		\]
		Using $\lambda_1 \leq 3/2 - n$  we see that it is sufficient to prove that 
		\begin{align*}
			(2b+a)(3/2 - n + n +1/2 -1) - a(n+1/2-2)  &\leq 2b^2 + 2ab + a^2 \\
			2b + a + (3/2-n)a &\leq 2b^2 + 2ab + a^2 \\
			0 &\leq 2b(b-1) + 2ab + a(a + n - 5/2).
		\end{align*}
		Since $a, b \geq 0$ and $n \geq 3$ we are done.
	\end{proof}
	
	\begin{lem}
		Let $s_{a,b} = (2b + a, a, 0,\ldots, 0)$ be a Schmid module with $a \geq 1$ and $b \geq 0.$ Let $\lambda$ satisfy \eqref{eq:so_odd_basic_dirac} in the case $\lambda_2 \neq 0$ and $\lambda_1 \leq 3/2-n$ in the scalar case. Then we have
		\begin{equation}\label{eq:so_odd_induction_b}
			\| (\lambda - s_{a,b+1})^+ + \rho \|^2 - \| \lambda + \rho \|^2 \geq \| (\lambda - s_{a, b})^+ + \rho\|^2 - \| \lambda + \rho \|^2.
		\end{equation}
	\end{lem}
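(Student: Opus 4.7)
The plan is to imitate verbatim the proof of the analogous lemma for $\frso(2,2n-2)$, adjusting only for the different value of $\rho_1$. The crucial structural observation is that the Weyl group $W_\frk$ for $\frso(2,2n-1)$ is generated by $s_{\eps_i \pm \eps_j}$ with $2 \leq i < j \leq n$ and by $s_{\eps_i}$ with $2 \leq i \leq n$, so its action fixes the first coordinate. Since $s_{a,b+1}$ and $s_{a,b}$ differ only in the first coordinate (by $2$), the sorting and sign-changing required to make the last $n-1$ coordinates of $\lambda - s_{a,b}$ and $\lambda - s_{a,b+1}$ $\frk$-dominant is exactly the same. Hence $(\lambda - s_{a,b+1})^+$ and $(\lambda - s_{a,b})^+$ agree in coordinates $2, \ldots, n$ and differ by $-2$ in the first coordinate.

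Consequently the inequality \eqref{eq:so_odd_induction_b} reduces, after cancellation of the coincident terms, to the elementary comparison
\[
(\lambda_1 - 2b - 2 - a + \rho_1)^2 \geq (\lambda_1 - 2b - a + \rho_1)^2,
\]
with $\rho_1 = n - 1/2$. Factoring the difference of squares and simplifying, I would show this is equivalent to
\[
\lambda_1 \leq a + 2b + \tfrac{3}{2} - n.
\]

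Finally I would verify this bound in each of the three cases of the hypotheses. In the scalar case $\lambda_1 \leq 3/2 - n$, and $a \geq 1$ gives $a + 2b \geq 1 > 0$, so the bound holds. In the spinor case $\lambda_1 \leq 1 - n \leq 3/2 - n \leq a + 2b + 3/2 - n$. In the general case $\lambda_2 \geq 1$, so \eqref{eq:so_odd_basic_dirac} forces $\lambda_1 \leq p - 2n \leq -n$, which is comfortably below $a + 2b + 3/2 - n$ since $a + 2b \geq 1$.

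I do not anticipate any real obstacle: the argument is a direct transcription of the even-dimensional case, and the $1/2$ shift in $\rho_1$ from $n-1$ to $n-1/2$ is precisely compensated by the $1/2$ shift in the spinor and scalar Dirac bounds (from $3/2 - n$ to $1 - n$, and from $2 - n$ to $3/2 - n$ respectively). The only point to be careful about is confirming that the first coordinate of each $(\lambda - s_{a,b})^+$ is indeed the one coming from position $1$ of $\lambda - s_{a,b}$, which follows from the $W_\frk$-invariance of that coordinate.
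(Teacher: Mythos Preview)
Your proposal is correct and follows essentially the same approach as the paper: both observe that $(\lambda - s_{a,b+1})^+$ and $(\lambda - s_{a,b})^+$ differ only in the first coordinate, reduce the inequality to $\lambda_1 \leq a + 2b + \tfrac{3}{2} - n$ via a difference of squares with $\rho_1 = n - \tfrac{1}{2}$, and then verify this bound in the scalar, spinor, and general cases using the respective hypotheses. Your added remark that $W_\frk$ fixes the first coordinate makes explicit what the paper leaves implicit.
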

	\begin{proof}
		Since the weights differ only in the first coordinate, the difference of the left hand side and the right hand side is just the difference of squares on the first coordinate:
		\begin{align*}
			(\lambda_1 - 2b - 2 - a + n - 1/2)^2 - (\lambda_1 - 2b - a + n - 1/2)^2  &\geq 0 \\
			-2 [ 2(\lambda_1 - 2b - a + n - 1/2) - 2] &\geq 0 \\
			\lambda_1 - 2b - a + n - 1/2  - 1  &\leq 0 \\
			\lambda_1 &\leq a + 2b + 3/2 - n
		\end{align*}
		For scalar $\lambda$ we immediately get $\lambda_1 \leq 3/2-n \leq a + 2b + 3/2 - n.$ In the spinorial case we get $\lambda_1 \leq 1 - n \leq a + 2b + 3/2 - n.$ In the remaining case we actually have $\lambda_2 \geq 1$ and so \eqref{eq:so_odd_basic_dirac} implies $\lambda_1 \leq p - 2n$ which is indeed less than or equal to $a + 2b  - n.$
	\end{proof}

	\begin{thm}
		Let $\lambda$ be as in case 2 or as in case 3 and let \eqref{eq:so_odd_basic_dirac} holds. Then the Dirac inequality \eqref{eq:so_odd_basic} holds for any Schmid module $s$.
	\end{thm}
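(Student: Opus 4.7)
The plan is to mirror the strategy from the $\frso(2,2n-2)$ general case, but now also subsume the spinor case (Case 2) into the same theorem. First I would invoke the preceding lemma \eqref{eq:so_odd_induction_b} to reduce the problem to Schmid modules of the form $s_{a,0}$ with $a\geq 0$. Indeed, that lemma shows that the quantity $\|(\lambda-s_{a,b})^++\rho\|^2-\|\lambda+\rho\|^2$ is nondecreasing in $b$ under the standing assumption on $\lambda$, so it suffices to prove
\[
\|(\lambda-s_{a,0})^++\rho\|^2 \geq \|\lambda+\rho\|^2
\]
for all $a\geq 1$ (the case $a=0$ being trivial).

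Next I would dispatch the spinor case directly. For $\lambda=(\lambda_1,1/2,\dots,1/2)$ with $a\geq 1$, one computes $(\lambda-s_{a,0})^+ = \lambda - \bigl[a\eps_1-(a-1)\eps_2\bigr]$, using that the $\frso(2,2n-1)$ Weyl group allows the sign change $s_{\eps_2}$ to restore $\frk$-dominance of the second coordinate $1/2-a$. The required inequality becomes
\[
2\bigl[a(\lambda_1+n-\tfrac12)-(a-1)(n-1)\bigr] \leq a^2+(a-1)^2,
\]
into which we substitute the hypothesis $\lambda_1\leq 1-n$; the result is a polynomial inequality in $a,n$ that reduces to something like $2(a-1)(n-1)+(a-1)^2 + a^2 - a \geq 0$, which is manifestly nonnegative for $a\geq 1$ and $n\geq 2$.

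For the general case (Case 3), I would use induction on $a$, exactly as in the $\frso(2,2n-2)$ general case. Setting $\lambda' = (\lambda-s_1)^+$ and $s' = s_{a-1,0}$, Lemma \ref{gen prv} applied with $\mu=\lambda'$, $\nu=s'$ and an appropriate $w_1\in W_\frk$ such that $w_1\lambda' - s' = \lambda - s_{a,0}$ (up to $W_\frk$-conjugacy) yields
\[
\|(\lambda-s_{a,0})^++\rho\|^2 \geq \|(\lambda'-s')^++\rho\|^2,
\]
while the basic Dirac inequality \eqref{eq:so_odd_basic_dirac} for $s_1$ gives $\|\lambda'+\rho\|^2 \geq \|\lambda+\rho\|^2$. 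Adding these reduces the claim for $(\lambda,s_{a,0})$ to the claim for $(\lambda',s_{a-1,0})$, and then one continues by induction on $a$, terminating at $a=0$ where $s_{0,0}=0$ gives equality.

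The main obstacle, and the only genuinely delicate step, is verifying that $\lambda'=(\lambda-s_1)^+$ continues to satisfy the hypothesis needed to invoke the inductive statement for $s_{a-1,0}$. Here $\lambda'$ may drop from Case 3 into Case 2 (spinorial) or Case 1 (scalar), so each subcase has to be checked: for $\lambda=(\lambda_1,1,0,\dots,0)$ the bound $\lambda_1+1\leq 3-2n$ from \eqref{eq:so_odd_basic_dirac} yields $\lambda_1'=\lambda_1-1\leq 1-2n$, which is well below the scalar unitarity bound $3/2-n$; for $\lambda=(\lambda_1,3/2,1/2,\dots,1/2)$ the bound gives $\lambda_1'\leq 1/2-2n$, below the spinor bound $1-n$; and for $p>2$ (respectively $p=2$ with generic $\lambda_2$) one checks that $\lambda_1'+\lambda_2'$ satisfies the Case 3 inequality with $p'=p-1$ (respectively $p'\geq 2$). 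With these verifications the induction closes and the theorem follows.
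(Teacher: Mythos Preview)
Your proposal is correct and follows essentially the same route as the paper: reduce to $s_{a,0}$ via the monotonicity lemma, then run the induction on $a$ using $\lambda'=(\lambda-s_1)^+$ and $s'=s_{a-1,0}$ together with Lemma~\ref{gen prv} and the basic inequality~\eqref{eq:so_odd_basic_dirac}, checking that $\lambda'$ again satisfies the requisite bound (possibly after dropping into the scalar or spinor case). The only difference is that the paper treats the spinor case by the very same induction (observing that if $\lambda$ is spinorial then $\lambda'=\lambda-\eps_1$ is again spinorial with $\lambda_1'=\lambda_1-1$), whereas you peel it off and verify it by a direct computation with $\gamma=a\eps_1-(a-1)\eps_2$; both arguments are valid and of comparable length.
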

	\begin{proof}
		
		Thanks to the previous lemma we only have to prove that for $a \geq 1$ we have
		\[
		\| (\lambda - s_{a,0})^+ + \rho \|^2 - \| \lambda + \rho \|^2 \geq 0,
		\]
		with strict inequality if \eqref{eq:so_odd_basic_dirac} is strict. It follows by induction from combining the following two inequalities
		\begin{gather*}
			\| (\lambda  - s_{a,0})^+ + \rho \|^2  \geq \| (\lambda' - s')^+ + \rho\|^2 \\
			\| \lambda' + \rho \|^2 \geq   \| \lambda + \rho \|^2
		\end{gather*}
		where
		\[
		\lambda' = (\lambda - s_1)^+  \quad s' = s_{a,0} - s_1 = s_{a-1, 0}.
		\]
		The first one immediately follows from the Lemma \ref{gen prv} for $\mu = (\lambda - s_1)^+$ and $\nu = (s_{a,0} - s_1)^+$ and the second one is the Dirac inequality for $s_1.$
		
		If $\lambda$ is in the spinor case, then $\lambda'$ is also in the spinor case. In the general situation $\lambda'$ can fall into all three cases. For $\lambda = (\lambda_1, 3/2, 1/2, \ldots, 1/2)$ we have spinorial $\lambda'$ with $\lambda_1' = \lambda_1 - 1$ and since our $\lambda$ satisfies \eqref{eq:so_odd_basic_dirac} we have $\lambda_1 \leq 3/2 - 2n$ which means that $\lambda_1' \leq 1/2 - 2n.$ For $\lambda'$ of general type the exactly same reasoning as in the even case ($\mathfrak{so}(2, 2n-2)$) finishes the proof.
	\end{proof}


\begin{thebibliography}{EHW}
		
		\bibitem [A1]{A1} J.~Adams, \emph{Unitary highest weight modules},
		Adv. Math. \textbf{63} (1987), 113--137.
		
		
		\bibitem[D]{D} C.-P.~Dong {\it On the Dirac cohomology of complex Lie group representations}, Transform. Groups,  \textbf{18 (1)} (2013), 61-79
		
		\bibitem[DES]{DES} M. Davidson, T. Enright, R. Stanke:
		\emph{Differential operators and highest weight representations}, Memoirs of AMS, {\bf 455}, 1991.
		
		\bibitem[DH]{DH} C.-P.~Dong, J.-S.~Huang, {\it Dirac cohomology of cohomologically induced modules for reductive Lie groups}, Amer. J. Math. \textbf{137 (1)} (2015), 37-60
		
		
		\bibitem[EHW]{EHW} T.~Enright, R.~Howe, N.~Wallach, \emph{A
			classification of unitary highest weight modules}, in
		\emph{Representation theory of reductive groups, Park City, Utah, 1982}, Birkh\"auser, Boston, 1983, 97--143.
		
		\bibitem[EJ]{EJ} T.~Enright, A.~Joseph, \emph{An intrinsic analysis of unitarizable highest weight modules}, Mathematische Annalen \textbf{288.4}, 1990,  571-594.
		
		\bibitem[ES]{ES} T.~J.~Enright, B.~Shelton, \emph{Categories of highest weight modules: applications to classical Hermitian symmetric pairs}, Mem. Amer. Math. Soc. \textbf{67} (1987), no. 367, iv+94.
		
		
		
		
		\bibitem[H]{H} J.-S.~Huang, {\it Dirac cohomology, elliptic representations and endoscopy, in Representations of
			Reductive Groups, in Honor of 60th Birthday of David Vogan}, M. Nevins, P. Trapa (eds), Birkhäuser, 2015.
		
		\bibitem [HKP]{HKP} J.-S.~Huang, Y.-F.~Kang and P.~Pand\v zi\'c, {\it Dirac cohomology of some Harish-Chandra modules}, Transform. Groups \textbf{14} (2009), 163--173.
		
		\bibitem [HP1]{HP1} J.-S.~Huang, P.~Pand\v zi\'c, \emph{Dirac
			cohomology, unitary representations and a proof of a conjecture of
			Vogan}, J. Amer. Math. Soc.  \textbf{15} (2002), 185--202.
		
		\bibitem [HP2]{HP2} J.-S.~Huang, P.~Pand\v zi\'c, \emph{Dirac
			Operators in Representation Theory}, Mathematics: Theory and
		Applications, Birkhauser, 2006.
		
		
		\bibitem[J]{J} H.P.~Jakobsen, \emph{Hermitian symmetric spaces and their unitary highest weight modules}, J. Funct. Anal. \textbf{52} (1983), 385--412.
		
		
		
		
		
		
		
		
		
		
		
		\bibitem[PPST1]{PPST1} P. Pandžić, A. Prlić, V. Souček, V. Tuček, Dirac inequality for highest weight Harish-Chandra modules II, preprint, 2022
		
		\bibitem[PPST2]{PPST2} P. Pandžić, A. Prlić, V. Souček, V. Tuček, On the classification of unitary highest weight modules, in preparation
		
		\bibitem[S]{S} W.~Schmid, \emph{Die Randwerte holomorpher Funktionen 
			auf hermitesch symmetrischen R\"aumen}, Invent. Math. \textbf{9} (1969/1970), 61–-80.
		
		
	\end{thebibliography}
\end{document}